%@@EDD2  13   208   204  77 AUTOR:<unregistered> CONT:
\documentclass[11pt]{amsart}  %@@

\linespread{1.2}
\usepackage[a4paper,hmargin={2.5cm,2.5cm},vmargin={2cm,2cm}]{geometry}

\usepackage{amsmath, amsthm, amssymb, eucal}
\usepackage{enumerate}
\usepackage{stmaryrd}
\usepackage[all,arc]{xy}
\usepackage{longtable}

\def\sizexy{2em} %scale for xy matrices

%%%%% Theorems

\theoremstyle{plain}
\newtheorem{theorem}{Theorem}[section]
\newtheorem{lemma}[theorem]{Lemma}
\newtheorem{prop}[theorem]{Proposition}

\theoremstyle{definition}
\newtheorem{definition}[theorem]{Definition}

\theoremstyle{remark}

\def\mathrmdef#1{\expandafter\def\csname#1\endcsname{{\rm#1}}}

\mathrmdef{Alg}
\mathrmdef{co}
\mathrmdef{false}
\mathrmdef{Fuz}
\mathrmdef{Id}
\mathrmdef{Mat}
\mathrmdef{ob}
\mathrmdef{op}
\mathrmdef{Rel}
\mathrmdef{Span}
\mathrmdef{true}

\def\mathbfdef#1{\expandafter\def\csname#1\endcsname{{\rm\bf#1}}}
\mathbfdef{App}\mathbfdef{Cat} \mathbfdef{CAT} \mathbfdef{CompHaus} \mathbfdef{D}  \mathbfdef{Mon} \mathbfdef{Nach} \mathbfdef{Ord} \mathbfdef{PrMet}\mathbfdef{MultiCat}
\mathbfdef{PrSet}\mathbfdef{PsTop}\mathbfdef{Rel}\mathbfdef{Set}\mathbfdef{Top}\mathbfdef{UltraCat}\mathbfdef{V}

%%%%%% Dirk's Macros

\newcommand{\calO}{\mathcal{O}}

%%%% Category morphisms arrows and composition

\newcommand\adjunct[2]{\xymatrix@=8ex{\ar@{}[r]|{\top}\ar@<1mm>@/^2mm/[r]^{{#2}} & \ar@<1mm>@/^2mm/[l]^{{#1}}}}

\newcommand{\relto}{{\longrightarrow\hspace*{-2.8ex}{\mapstochar}\hspace*{2.6ex}}}

\newcommand{\kto}{\relbar\joinrel\rightharpoonup}
\newcommand{\krelto}{\,{\kto\hspace*{-2.5ex}{\mapstochar}\hspace*{2.6ex}}}

\newcommand{\mon}[1]{{\mathbb{#1}}}

\newcommand{\mT}{\mon{T}}

\newcommand{\mU}{\mon{U}}
\newcommand{\mM}{\mon{M}}

\newcommand{\mmonad}{(M,e,m)}

%%%%% categories

\newcommand{\catfont}[1]{\mathbf{#1}}

\newcommand{\catB}{\catfont{B}}
\newcommand{\ORDCH}{\catfont{OrdCompHaus}}
\newcommand{\TOP}{\catfont{Top}}
\newcommand{\STCOMP}{\catfont{StablyComp}}

\newcommand{\MONCAT}{\catfont{MonCat}}
\newcommand{\SPAN}{\catfont{Span}}
\newcommand{\MULTICAT}{\catfont{MultiCat}}
\newcommand{\REPMULTICAT}{\catfont{RepMultiCat}}

\newcommand{\Rels}[1]{#1\text{-}\catfont{Rel}}
\newcommand{\Cats}[1]{#1\text{-}\catfont{Cat}}

%%%%%% End Dirk's Macros

\def\VRel{\V\mbox{-}\Rel}
\def\TVCat{(\mT,\V)\mbox{-}\Cat}
\def\VCat{\V\mbox{-}\Cat}

\def\xx{\mathfrak{x}}
\def\yy{\mathfrak{y}}

%gothic?

\def\ea{\hat{a}}
\def\ebeta{\hat{\beta}}

%@@
\def\lrw{\longrightarrow }  %@@
  %@@
  %@@
  %@@
  %@@
\def\rel{\longrightarrow\hspace*{-2.8ex}{\mapstochar}\hspace*{2.6ex}}
\def\fii{\varphi }
\def\two{\mbox{\sf 2}}

\def\cat{cat\-egory }

\title{Representable $(\mT, \V)$-categories}
\author{D. Chikhladze}
\address{CMUC, Department of Mathematics, University of Coimbra, 3001-501 Coimbra, Portugal}
\email{dimitri@mat.uc.pt}
\author{M. M. Clementino}
\address{CMUC, Department of Mathematics, University of Coimbra, 3001-501 Coimbra, Portugal}
\email{mmc@mat.uc.pt}
\author{D. Hofmann}
\address{CIDMA -- Center for Research and Development in Mathematics and Applications, Department of Mathematics, University of Aveiro, 3810-193 Aveiro, Portugal}
\email{dirk@ua.pt}

\date{}

\begin{document}  %@@

\begin{abstract}  %@@
Working in the framework of $(\mT,\V)$-categories, for a symmetric monoidal closed category $\V$ and a (not necessarily cartesian) monad $\mT$, we present a common account to the study of ordered compact Hausdorff spaces and stably compact spaces on one side and monoidal categories and representable multicategories on the other one. In this setting we introduce the notion of dual for $(\mT,\V)$-categories.
\vspace{0.1cm}  %@@

\hspace*{-\parindent}{\em  Mathematics Subject Classification}: 18C20, 18D15, 18A05, 18B30, 18B35.  %@@
\vspace{.1cm}  %@@

\noindent {\em Key words}: monad, Kock-Z\"oberlein monad, multicategory, topological space, $(\mT,\V)$-category.
\end{abstract}

\maketitle

\section{Introduction}

The principal objective of this paper is to present a common account to the study of ordered compact Hausdorff spaces and stably compact spaces on one side and monoidal categories and representable multicategories on the other one. Both theories have similar features but were developed independently.

On the topological side, the starting point is the work of Stone on the representation of Boolean algebras \cite{Sto36} and distributive lattices \cite{Sto38}. In the latter paper, Stone proves that (in modern language) the category of distributive lattices and homomorphisms is dually equivalent to the category of spectral topological spaces and spectral maps. Here a topological space is spectral whenever it is sober and the compact open subsets form a basis for the topology which is closed under finite intersections; and a continuous map is called spectral whenever the inverse image of a compact open subset is compact. Later Hochster \cite{Hoc69} showed that spectral spaces are, up to homeomorphism, the prime spectra of commutative rings with unit, and in the same paper he also introduced a notion of dual spectral space. A different perspective on duality theory for distributive lattices was given by Priestley in \cite{Pri70}: the category of distributive lattices and homomorphisms is also dually equivalent to the category of certain ordered compact Hausdorff spaces (introduced by Nachbin in \cite{Nac50}) and continuous monotone maps. In particular, this full subcategory of the category of ordered compact Hausdorff spaces is equivalent to the category of spectral spaces. In fact, this equivalence generalises to all ordered compact Hausdorff spaces: the category $\ORDCH$ of ordered compact Hausdorff spaces and continuous monotone maps is equivalent to the category $\STCOMP$ of stably compact spaces and spectral maps (see \cite{GHK+80}). Furthermore, as shown in \cite{Sim82} (see also \cite{EF99}), stably compact spaces can be recognised among all topological spaces by a universal property; namely, as the algebras for a Kock-Z\"oberlein monad (or lax idempotent monad, or simply KZ; see \cite{Ko}) on $\TOP$. Finally, Flagg \cite{Fla97a} proved that $\ORDCH$ is also monadic over ordered sets.

Independently, a very similar scenario was developed by Hermida in \cite{Her00,Her01} in the context of higher-dimensional category theory, now with monoidal categories and multicategories in lieu of ordered compact Hausdorff spaces and topological spaces. More specifically, he introduced in \cite{Her00} the notion of representable multicategory and constructed a 2-equivalence between the 2-category of representable multicategories and the 2-category of monoidal categories; that is, representable multicategories can be seen as a higher-dimensional counterpart of stably compact topological spaces. More in detail, we have the following analogies:
\begin{longtable}{rl}
 topological space & multicategory,\\
 ordered compact Hausdorff space & monoidal category,\\
 stably compact space & representable multicategory;
\end{longtable}
\noindent and there are KZ-monadic 2-adjunctions
\begin{align*}
 \ORDCH\adjunct{}{}\Top && \MONCAT\adjunct{}{}\MULTICAT,
\intertext{which restrict to 2-equivalences}
 \ORDCH\simeq\STCOMP && \MONCAT\simeq\REPMULTICAT.
\end{align*}

To bring both theories under one roof, we consider here the setting used in \cite{CT03} to introduce $(\mT,\V)$-categories; that is, a symmetric monoidal closed category $\V$ together with a (not necessarily cartesian) monad $\mT$ on $\Set$ laxly extended to the bicategory $\Rels{\V}$ of $\V$-relations. After recalling the notions of $(\mT,\V)$-categories and $(\mT,\V)$-functors, we proceed showing that the above-mentioned results hold in this setting: the $\Set$-monad $\mT$ extends naturally to $\Cats{\V}$, and its Eilenberg--Moore category admits an adjunction
\[
 (\Cats{\V})^\mT\adjunct{}{}\Cats{(\mT,\V)},
\]
so that the induced monad is of Kock-Z\"oberlein type. Following the terminology of \cite{Her00}, we call the pseudo-algebras for the induced monad on $\Cats{(\mT,\V)}$ representable $(\mT,\V)$-categories. We explain in more detail how this notion captures both theories mentioned above. Finally, we introduce a notion of dual $(\mT,\V)$-category. We recall that this concept turned out to be crucial in the development of a completeness theory for $(\mT,\V)$-categories when $\V$ is a quantale, i.e.\ a small symmetric monoidal closed complete category (see \cite{CH09}).
\smallskip

From a more formal point of view, $(\mT, \V)$-categories are monads within a certain bicategory-like structure. Some of the theory presented in this paper is ``formal monad theoretical" in character. This perspective will be developed in an upcoming paper \cite{Ch14}.

\section{Basic assumptions}

Throughout the paper \emph{$\V$ is a complete, cocomplete, symmetric
monoidal-closed category, with tensor product $\otimes$ and unit
$I$}. Normally we avoid explicit reference to the natural unit,
associativity and symmetry isomorphisms.

The bi\cat $\VRel$ of $\V$-relations (also  called $\Mat(\V)$: see \cite{BCSW, RW}) has as \vspace*{1mm}\\
-- objects sets, denoted by $X$, $Y$, $\dots$, also
considered as (small) discrete categories,\vspace*{1mm}\\
-- arrows (=1-cells) $r:X\rel Y$ are families of $\V$-objects
$r(x,y)$ ($x\in X,\,y\in Y$),\vspace*{1mm}\\
-- 2-cells $\varphi:r\to r'$ are families of morphisms
$\varphi_{x,y}:r(x,y)\to r'(x,y)$ ($x\in X,\,y\in Y$) in $\V$,
i.e., natural transformations $\fii:r\to r'$; hence, their
(vertical) composition is computed componentwise in $\V$:
\[(\fii'\cdot \fii)_{x,y}=\fii'_{x,y}\fii_{x,y}.\]
The (horizontal) composition of arrows $r:X\rel Y$ and $s:Y\rel Z$
is given by {\em relational multiplication}:
\[(sr)(x,z)=\sum_{y\in Y}\;r(x,y)\otimes s(y,z),\]
which is extended naturally to 2-cells; that is,  for $\fii:r\to
r'$, $\psi:s\to s'$,
\[(\psi\fii)_{x,z}=\sum_{y\in
Y}\;\fii_{x,y}\otimes\psi_{y,z}:(sr)(x,z)\to(s'r')(x,z).\]

There is a pseudofunctor $\Set\lrw\VRel$ which maps objects
identically and treats a $\Set$-map $f:X\to Y$ as a $\V$-relation
$f:X\relto Y$ in $\VRel$, with $f(x,y)=I$ if $f(x)=y$ and
$f(x,y)=\bot$ otherwise, where $\bot$ is a fixed initial object of
$\V$. If an arrow $r:X\relto Y$ is given by a $\Set$-map, we shall
indicate this by writing $r:X\to Y$, and by normally using
$f,\,g,\,\dots$, rather than $r,\,s,\,\dots$.

Like for $\V$, in order to simplify formulae and diagrams, we disregard the unity and associativity isomorphisms
in the  bicategory $\VRel$ when convenient.

$\VRel$ has a pseudo-involution, given by {\em
transposition}: the transpose $r^\circ :Y\rel X$ of $r:X\rel Y$ is
defined by $r^\circ (y,x)=r(x,y)$; likewise for 2-cells. In
particular, there are natural and coherent isomorphisms
\[(sr)^\circ \cong r^\circ s^\circ \]
involving the symmetry isomorphisms of $\V$. The
transpose $f^\circ $ of a $\Set$-map $f:X\to Y$ is a
right adjoint to $f$ in the bi\cat $\VRel$, so that $f$ is really a
``map" in Lawvere's sense; hence, there are 2-cells
\[\xymatrix{1_X\ar[r]^{\lambda_f}& f^\circ f&\mbox{ and }&ff^\circ \ar[r]^{\rho_f}& 1_Y}\]satisfying the triangular
identities.
\bigskip

We fix a \emph{monad
$\mT=(T,e,m)$ on $\Set$ with a
lax extension to $\VRel$}, again denoted by $\mT$, so that:
\begin{enumerate}[--]
\item There is a lax functor $T:\VRel\to\VRel$ which extends
the given $\Set$-functor; hence, for an arrow $r:X\rel Y$ we are
given $Tr:TX\rel TY$, with $Tr$ a $\Set$-map if $r$ is one,
and $T$ extends to 2-cells functorially:
\[T(\fii'\cdot\fii)=T\fii'\cdot T\fii,\;\;T1_r=1_{Tr};\]
furthermore, for all $r:X\rel Y$ and $s:Y\rel Z$ there are natural and
coherent 2-cells
\[\kappa=\kappa_{s,r}:TsTr\lrw T(sr),\]
so that the following diagrams commute:
\begin{equation}\tag{lax}\label{eq:lax}
\xymatrix{TsTr\ar[r]^{\kappa_{s,r}}\ar[d]_{(T\psi)(T\fii)}&T(sr)\ar[d]^{T(\psi\fii)}&
TtT(sr)\ar[r]^{\kappa_{t,sr}}&T(tsr)\\
Ts'Tr'\ar[r]^-{\kappa_{s',r'}}&T(s'r')
&TtTsTr\ar[r]^-{\kappa_{t,s}-}\ar[u]^{-\kappa_{s,r}}&T(ts)Tr
\ar[u]_{\kappa_{ts,r}}}
\end{equation}
(also: $\kappa_{r,1_X}=1_{Tr}=\kappa_{1_Y,r}$; all unity and
associativity isomorphisms are suppressed).\vspace*{1mm}\\
Furthermore, \emph{we assume that $T(f^\circ)=(Tf)^\circ$ for every map $f$.}

\noindent It follows that whenever $f$ is a set map $\kappa_{s, f}$ is invertible. Its inverse is the composite
\[T(sf) \xrightarrow{-\lambda_{Tf}} T(sf)Tf^\circ Tf \xrightarrow{\kappa_{sf, f^\circ}-} T(sff^\circ) Tf \xrightarrow{T(s\rho_f)-} Ts Tf.\]
Also, $\kappa_{f^\circ, t}$ is invertible, for $t:Z\rel Y$. Its inverse is the composite
\[T(f^\circ t) \xrightarrow{\lambda_{Tf}-} Tf^\circ Tf T(f^\circ t) \xrightarrow{-\kappa_{f, f^\circ t}} Tf^\circ T(f f^\circ t) \xrightarrow{-T(\rho_ft)} Tf^\circ Tt.\]

\item The natural transformations $e:1\to T$, $m:T^2\to T$ of $\Set$
are $\op$-lax in $\VRel$, so that for every $r:X\rel Y$ one
has natural and coherent $2$-cells
\[\alpha=\alpha_r:e_Yr\to
Tre_X,\;\;\beta=\beta_r:m_YT^2r\to Trm_X,\mbox{ as in}\]
\begin{equation}\tag{oplax}\label{eq:oplax}
\xymatrix{  X \ar[r]|-{\object@{|}}^r
\ar[d]_{e_X} & Y \ar[d]^{e_Y}
  \ar@{}[dl]|{\mbox{\large $\stackrel{\alpha}{\Leftarrow}$}} &
  T^2X\ar[r]|-{\object@{|}}^{T^2r}\ar[d]_{m_X}&T^2Y
  \ar@{}[dl]|{\mbox{\large $\stackrel{\beta}{\Leftarrow}$}}\ar[d]^{m_Y} \\
  TX \ar[r]|-{\object@{|}}_{Tr} & TY&
  TX\ar[r]|-{\object@{|}}_{Tr}&TY}
\end{equation}
such that $\alpha_{f}=1_{e_Yf}$, $\beta_{f}=1_{m_YT^2f}$
whenever $r=f$ is a $\Set$-map.
\item The following diagrams
commute (where again we disregard associativity isomorphisms):
\begin{equation}\tag{mon}\label{eq:mon}
\xymatrix@!R=3mm{&&m_YTe_YTr\ar[r]^{-\kappa_{e_Y,r}}\ar[dd]_1&
m_YT(e_Yr)\ar[r]^{-T\alpha_r}&m_YT(Tre_X)\ar[d]^{-\kappa^{-1}_{Tr,e_X}}\\
m_Ye_{TY}Tr\ar[r]^{-\alpha_{Tr}}\ar[d]_1&m_YT^2re_{TX}\ar[d]^{\beta_r-}&
&&m_YT^2rTe_X\ar[d]^{\beta_r-}\\
Tr\ar[r]^-{1}&Trm_Xe_{TX}&Tr\ar[rr]^-{1}&&Trm_XTe_X\\
m_YTm_YT^3r\ar[d]_1\ar[r]^{-\kappa_{m_Y,T^2r}}&
m_YT(m_YT^2r)\ar[r]^{-T\beta_r}&
m_YT(Trm_X)\ar[d]^{-\kappa^{-1}_{Tr,m_X}}\\
m_Ym_{TY}T^3r\ar[d]_{-\beta_{Tr}}&&m_YT^2rTm_X\ar[d]^{\beta_r-}\\
m_YT^2rm_{TX}\ar[r]_{\beta_r-}&Trm_Xm_{TX}\ar[r]_1&
Trm_XTm_X.}\end{equation}
\item One also needs the coherence conditions
\begin{equation}\tag{coh}\label{eq:coh}
\xymatrix@!R=3mm{e_Zsr\ar[r]^{\alpha_s-}\ar[d]_{1}&Tse_Yr\ar[r]^{-\alpha_r}&
TsTre_X\ar[d]^{\kappa_{s,r}-}\\
e_Zsr\ar[rr]^{\alpha_{sr}}&&T(sr)e_X\\
m_ZT^2sT^2r\ar[r]^{\beta_s-}\ar[d]_{-\kappa_{Ts,Tr}}
&Tsm_YT^2r\ar[r]^{-\beta_r}&TsTrm_X\ar[dd]^{\kappa_{s,r}-}\\
m_ZT(TsTr)\ar[d]_{-T\kappa_{s,r}}&&\\
m_ZT^2(sr)\ar[rr]^{\beta_{sr}}&&T(sr)m_X.}
\end{equation}
\item And the following naturality conditions, for all $\fii:r\to r'$,
\begin{equation}\tag{nat}\label{eq:nat}
T\fii e_X\cdot\alpha_r=\alpha_{r'}\cdot e_Y\fii\;\mbox{ and }
T\fii m_X\cdot\beta_r=\beta_{r'}\cdot m_YT^2\fii.
\end{equation}
\end{enumerate}
The op-lax natural transformations $e$ and $m$ induce two lax natural transformations
\[(e^\circ,\hat{\alpha}):T\to\Id_{\VRel}\mbox{ and }(m^\circ,\hat{\beta}):T\to T^2\] on $\VRel$: for each $r:X\relto Y$ we have
\[\xymatrix{TX \ar[r]|-{\object@{|}}^{Tr}
\ar[d]|-{\object@{|}}_{e_X^\circ}\ar@{}[dr]|{\mbox{\large $\stackrel{\hat{\alpha}}{\Rightarrow}$}} & TY \ar[d]|-{\object@{|}}^{e_Y^\circ}
   &
  TX\ar[r]|-{\object@{|}}^{Tr}\ar[d]|-{\object@{|}}_{m_X^\circ}\ar@{}[dr]|{\mbox{\large $\stackrel{\hat{\beta}}{\Rightarrow}$}}&TY
  \ar[d]|-{\object@{|}}^{m_Y^\circ} \\
X \ar[r]|-{\object@{|}}_{r} & Y&
  T^2X\ar[r]|-{\object@{|}}_{T^2r}&T^2Y}\]
where $\hat{\alpha}_r:re_X^\circ\to e_Y^\circ Tr$ and $\hat{\beta}_r:T^2rm_X^\circ\to m_Y^\circ Tr$, are mates of $\alpha_r$ and $\beta_r$ respectively, i.e. they are defined by the composites:
\[\xymatrix{re_X^\circ\ar[r]^-{\lambda_{e_Y}-}&e_Y^\circ e_Y re_X^\circ\ar[r]^-{-\alpha_r-}&e_Y^\circ Tr e_Xe_X^\circ\ar[r]^-{-\rho_{e_X}}&e_Y^\circ Tr\\
T^2rm_X^\circ\ar[r]^-{\lambda_{m_Y}-}&m_Y^\circ m_Y T^2r m_X^\circ\ar[r]^-{-\beta_r-}&m_Y^\circ Tr m_X m_X^\circ\ar[r]^-{-\rho_{m_X}}&m_Y^\circ Tr.}\]

\section{$(\mT,\V)$-categories}\label{sect:three}
Now we define the 2-\cat $\TVCat$ of $(\mT,\V)$-categories, $(\mT,\V)$-functors and transformations between these:
\begin{enumerate}[--]
\item \emph{$(\mT,\V)$-categories} are defined as $(X,a,\eta_a,\mu_a)$, with $X$ a set, $a:TX\rel X$ a $\V$-relation, and $\eta_a$ and $\mu_a$ 2-cells as in the following diagrams:
\[%\tag{TVcat}
\xymatrix{X\ar[dr]_{1_X}\ar[r]^{e_X}&TX\ar[d]|-{\object@{|}}^a&TX\ar[d]|-{\object@{|}}_a&T^2X\ar[l]|-{\object@{|}}_{Ta}\ar[d]^{m_X}\\
\ar@{}[ur]|(.75){\mbox{\large $\stackrel{\eta_a}{\Rightarrow}$}}&X&X\ar@{}[ur]|{\mbox{\large
$\stackrel{\mu_a}{\Rightarrow}$}}&
TX;\ar[l]|-{\object@{|}}^a}
\]
furthermore, $\eta_a,\,\mu_a$ provide a
generalized monad structure on $a$, i.e., the following diagrams
must commute (modulo associativity isomorphisms):
\begin{equation}\tag{cat}\label{eq:cat}
\xymatrix{ae_Xa\ar[r]^-{-\alpha_a} & aTae_{TX}\ar[d]^-{\mu_a-}
&aT(ae_X)\ar[r]^-{-\kappa^{-1}_{a,e_X}}&aTaTe_X\ar[d]^{\mu_a-}\\
    a\ar[u]^{\eta_a -}\ar[r]^-{1} & am_Xe_{TX} & a\ar[u]^{-T\eta_a}\ar[r]^-{1}&a m_XTe_X\\
aTaT^2a\ar[r]^{-\kappa_{a,Ta}}\ar[d]_{\mu_a-} &
aT(aTa)\ar[r]^{-T\mu_a} &aT(am_X)\ar[d]^{-\kappa^{-1}_{a,m_X}}\\
am_XT^2a\ar[d]_{-\beta_a}& &aTaTm_X\ar[d]^{\mu_a-}\\
aTam_{TX}\ar[r]^{\mu_a -}&am_Xm_{TX}\ar[r]^{1}&am_XTm_X.}
\end{equation}
We will sometimes denote a $(\mT,\V)$-category $(X,a,\eta_a,\mu_a)$ simply by $(X,a)$.

\item A \emph{$(\mT,\V)$-functor}
$(f,\fii_f):(X,a,\eta_a,\mu_a)\to(Y,b,\eta_b,\mu_b)$ between $(\mT,\V)$-categories is given by a $\Set$-map $f:X\to Y$ equipped with a 2-cell $\varphi_f:fa\to bTf$
\[\label{eq:old6}
\xymatrix{TX\ar[r]^{Tf}\ar[d]|-{\object@{|}}_a&
TY\ar[d]|-{\object@{|}}^b\\
X\ar[r]_f\ar@{}[ur]|{\mbox{\large
$\stackrel{\varphi_f}{\Rightarrow}$}}&Y}
\]
making the following diagrams commute:
\begin{equation}\tag{fun}\label{eq:fun}
\xymatrix@!R=.2mm{f\ar[r]^-{-\eta_a}\ar[dd]_{\eta_b -} & fae_X\ar[dd]^{\fii_f -}\\
\\
    be_Yf\ar[r]^-{1} & bTfe_X\\
    faTa\ar[rr]^{-\mu_a}\ar[dd]_{\varphi_f-}&&fam_X\ar[ddd]^{\fii_f -}\\
&&\\ bTfTa\ar[dd]_{-\kappa_{f,a}}&&\\
&&bTfm_X\ar[ddd]^{1}\\ bT(fa)\ar[dd]_{-T\fii_f}&&\\
&&\\
bT(bTf)\ar[r]^-{-\kappa^{-1}_{b,Tf}}&bTbT^2f\ar[r]^-{\mu_b-}&bm_YT^2f.}
\end{equation}

\item A \emph{$(\mT,\V)$-natural transformation} (or simply a \emph{natural transformation}) between $(\mT,\V)$-functors $(f, \fii_f) \to (g, \fii_g)$ is defined as a 2-cell $\zeta: ga \to bTf$
\[
\xymatrix{TX\ar[r]^{Tf}\ar[d]|-{\object@{|}}_a&
TY\ar[d]|-{\object@{|}}^b\\
X\ar[r]_g\ar@{}[ur]|{\mbox{\large
$\stackrel{\zeta}{\Rightarrow}$}}&Y}
\]
such that the two sides of the following diagram commute
\[\xymatrix@C=.6em@R=1.2em{& \ar[ld]_{\zeta -}gae_Xa&& \ar[ll]_-{-\eta_a-} ga\ar[dddd]_{\zeta} \ar[rr]^-{1}&&ga  \ar[rd]^{\fii_g}&\\
 \ar[d]^{1}bTfe_Xa&&&&&&bTg \ar[d]_{-T(g\eta_a)}\\
\ar[d]^{-\fii_f}be_Yfa&&&&&&bT(gae_X)\ar[d]_{-T(\zeta e_X)}\\
\ar[rd]_-{-\alpha_b-}be_YbTf&&&&&&bT(bTfe_X)\ar[ld]^-{-\kappa^{-1}_{b,e_Yf}}\\
&\ar[rr]_-{\mu_b-}bTbe_{TY}Tf&&bTf&&bTbTe_YTf\ar[ll]^-{\mu_b-}&}
\]
Such a 2-cell $\zeta$ is determined by the 2-cell
\begin{equation}\tag{$\zeta_0$}\label{eq:zeta0}
\xymatrix{(g\ar[r]^-{\zeta_0}& be_Yf)=(g\ar[r]^-{-\eta_a}&gae_X\ar[r]^-{\zeta-}&bTfe_X= be_Yf),}
\end{equation}
from which it can be reconstructed by either side of the above diagram.
\end{enumerate}

The composite of $(\mT, \V)$-functors $(f, \fii_f)$ and $(g, \fii_g)$ is defined by the picture
\[\xymatrix{TX\ar[r]^{Tf}\ar[d]|-{\object@{|}}_a&
TY\ar[d]|-{\object@{|}}^b\ar[r]^{Tg}&TZ\ar[d]|-{\object@{|}}^c\\
X\ar[r]_f\ar@{}[ur]|{\mbox{\large
$\stackrel{\varphi_f}{\Rightarrow}$}}&Y\ar[r]_g\ar@{}[ur]|{\mbox{\large
$\stackrel{\varphi_g}{\Rightarrow}$}}&Z,}\]
that is as $(gf,\varphi_{gf})$, with $\varphi_{gf}=(\varphi_g Tf)(g\varphi_f)$.
The identity $(\mT,\V)$-functor on $(X,a)$ is $(1_X,1_a)$.
The horizontal composition of $(\mT,\V)$-natural transformations $\zeta : (f, \fii_f) \to (g, \fii_g)$ and $\zeta' :  (f', \fii_{f'}) \to (g', \fii_{g'})$ is defined by a picture obtained from the above one by replacing $\fii_f$ and $\fii_g$ with $\zeta$ and $\zeta'$.
The vertical composition of $(\mT,\V)$-natural transformations $\zeta:(f,\fii_f)\to (g,\fii_g)$ and $\zeta':(g,\fii_g)\to(h,\fii_h)$ is defined by the diagram
\[\xymatrix{TX\ar@/_3pc/[dd]_{1_{TX}}^{\mbox{\large ${\stackrel{T\eta_a}{\Rightarrow}}$}}
%\ar `l^d[dd]`^r[dd]_{1_{TX}}^{\mbox{\large ${\stackrel{\mu_b-}{\Rightarrow}}$}} [dd]
\ar[d]_{Te_X}\ar[rr]^{Tf}&&TY\ar[d]^{Te_Y}\ar@/^3pc/[ddd]|-{\object@{|}}^{bm_YTe_Y=b}_{\mbox{\large ${\stackrel{\mu_b-}{\Rightarrow}}$}}
%\ar `r_d[ddd]`_l[ddd]|-{\object@{|}}^{bm_YTe_Y=b}_{\mbox{\large ${\stackrel{\mu_b-}{\Rightarrow}}$}} [ddd]
\\
T^2X\ar@{}[rrd]|{\mbox{\large ${\stackrel{T\zeta}{\Rightarrow}}$}}\ar[d]|-{\object@{|}}_{Ta}\ar[rr]^{T^2f}&&T^2Y\ar[d]|-{\object@{|}}^{Tb}\\
TX\ar@{}[rrd]|{\mbox{\large ${\stackrel{\zeta'}{\Rightarrow}}$}}\ar[rr]^{Tg}\ar[d]|-{\object@{|}}_a&&TY\ar[d]|-{\object@{|}}^b\\
X\ar[rr]^h&&Y.}\]
The identity natural transformation on a $(\mT, \V)$-functor $(f, \fii_f)$ is the 2-cell $\fii_f$ itself.

The definitions of horizontal and vertical compositions can be naturally stated in terms of the alternative definition (\ref{eq:zeta0}) of $(\mT,\V)$-natural transformation too.

\bigskip
When $\mT$ is the identity monad, identically extended to $\VRel$, the category $\TVCat$ is exactly the 2-category $\VCat$ of $\V$-categories, $\V$-functors and \V-natural transformations.

Next we summarize briefly our two main examples. In the first example,  $\V=\two$ and $\mT$ is the ultrafilter monad together with a suitable extension to $\two\mbox{-}\Rel = \Rel$. In this case $\TVCat$ is the category of topological spaces and continuous maps. In the second example, $\V=\Set$ and $\mT$ is the free-monoid monad with a suitable extension to $\Set\mbox{-}\Rel = \rm\bf{Span}$. In this case $\TVCat$ is the category of multicategories and multifunctors. For details on these examples, as well as for other examples, see \cite{CT03, HST14}.

\bigskip
For any $\mT$ there is an adjunction of 2-functors:
\begin{equation}\tag{adj}\label{eq:adj}
\TVCat\adjunct{A^\circ}{A_e}\VCat.
\end{equation}
$A_e$ is the algebraic functor associated with $e$, that is, for any $(\mT,\V)$-category $(X,a,\eta_a,\mu_a)$, $(\mT,\V)$-functor $(f,\varphi_f)$ and $(\mT,\V)$-natural transformation $\zeta:(f,\varphi_f)\to(g,\varphi_g)$, $A_e(X,a,\eta_a,\mu_a)=(X,ae_X,\eta_a,\overline{\mu}_a)$, where
\[\xymatrix{(ae_Xae_X\ar[r]^-{\overline{\mu}_a} &ae_X)=(ae_Xae_X\ar[r]^-{-\alpha_a-}&aTae_{TX}e_X\ar[r]^-{\mu_a-}&am_Xe_{TX}e_X=ae_X),}\]
$A_e(f,\varphi_f)=(f,\varphi_fe_X)$ and $A_e(\zeta)=\zeta e_X$ (see \cite{CT03} for details).

$A^\circ$ is defined as follows. For a $\V$-category $(Z,c,\eta_c,\mu_c)$, $A^\circ(Z,c,\eta_c,\mu_c)$ is the $(\mT,\V)$-category $(Z,c^\sharp,\eta_{c^\sharp},\mu_{c^\sharp})$ where $c^\sharp=e_Z^\circ Tc$, while $\eta_{c^\sharp}:1\to e_Z^\circ Tce_Z$ and $\mu_{c^\sharp}:e_Z^\circ Tc T(e_Z^\circ Tc)\to e_Z^\circ Tc m_Z$ are defined by the composites
\[\xymatrix{1\ar[r]^-{\lambda_{e_Z}}&e_Z^\circ e_Z\ar[r]^-{-T\eta_c-}&e_Z^\circ Tc e_Z}\]
\[\xymatrix@R=1.5em{
T^2Z \ar[dd]_{m_Z} \ar[r]|-{\object@{|}}^{T^2c} \ar@{}[rdd]|{\mbox{\large $\stackrel{\beta_c}{\Leftarrow}$}} \ar@/^2.5pc/[rrr]|-{\object@{|}}^{T(e_Z^\circ Tc)}_{\mbox{\large $\stackrel{\kappa^{-1}_{e_Z^\circ, Tc}}{\Leftarrow}$}} &T^2Z  \ar[rr]|-{\object@{|}}^{Te^\circ_Z} \ar[d]_{1_{T^2Z}} \ar@{}[rd]|{\mbox{\large $\stackrel{\rho_{Te_Z}}{\Leftarrow}$}} &&TZ  \ar[d]|-{\object@{|}}_{Tc} \ar@/^1pc/[lld]^{Te_Z}\\
&T^2Z \ar[d]_{m_Z} && TZ \ar[d]|-{\object@{|}}_{e^\circ_Z}\\
TZ \ar[r]|-{\object@{|}}_{Tc} \ar@/_2pc/[rr]|-{\object@{|}}_{Tc}^{\mbox{\large $\stackrel{T\mu_c \kappa_{c,c}}{\Leftarrow}$}}& TZ  \ar[r]|-{\object@{|}}_{Tc} &TZ \ar[r]|-{\object@{|}}_{e^\circ_Z}&Z.
}
\]
 For a  $\V$-functor $(f, \fii_f) : (Z, c) \to (Z', c')$, $A^\circ(f, \fii_f)$ is defined by the diagram
\[\xymatrix{TZ\ar[r]|-{\object@{|}}^{Tc}\ar[d]_{Tf}&
TZ\ar[r]|-{\object@{|}}^{e^\circ_Z}\ar[d]^{Tf}&Z\ar[d]^f\\
TZ'\ar[r]|-{\object@{|}}_{Tc'}\ar@{}[ur]|{\mbox{\large
$\stackrel{T\varphi_f}{\Leftarrow}$}}&TZ'\ar[r]|-{\object@{|}}_{e^\circ_{Z'}}\ar@{}[ur]|{\mbox{\large
$\stackrel{}{\Leftarrow}$}}&Z',}\]
wherein the right 2-cell is the mate of the identity 2-cell $1_{Tfe_Z=e_{Z'}f}$. On $\V$-natural transformations $A^\circ$ is defined by a similar diagram. By direct verifications $A^\circ$ is indeed a 2-functor, and as already stated we have:

\begin{prop}
$A^\circ$ is a left 2-adjoint to $A_e$.
\end{prop}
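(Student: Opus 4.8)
The plan is to establish the adjunction $A^\circ\dashv A_e$ by exhibiting unit and counit 2-natural transformations and verifying the triangle identities, rather than checking a hom-category isomorphism directly; given the $2$-categorical setting, the natural bijection of hom-categories would in any case have to be promoted to an isomorphism of categories, so working with unit and counit is cleaner. First I would identify the counit $\varepsilon:A^\circ A_e\to\Id$. For a $(\mT,\V)$-category $(X,a,\eta_a,\mu_a)$, we have $A_e(X,a)=(X,ae_X)$ and then $A^\circ A_e(X,a)=(X,(ae_X)^\sharp)$ with underlying $\V$-relation $(ae_X)^\sharp=e_X^\circ T(ae_X)$. Using that $\kappa_{a,e_X}$ is invertible (recorded in the excerpt, since $e_X$ is a set map), one gets a canonical 2-cell $e_X^\circ T(ae_X)\to e_X^\circ TaTe_X\to e_X^\circ Ta$, and then composing with the lax cell $\hat\alpha_a:ae_X^\circ\to e_X^\circ Ta$ (the mate of $\alpha_a$ from the excerpt) supplies a comparison $\varepsilon_{(X,a)}:(ae_X)^\sharp\to a$; the identity map on $X$ together with this 2-cell should be the counit component. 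The unit $\eta:\Id\to A_e A^\circ$ on a $\V$-category $(Z,c)$ is easier: $A_e A^\circ(Z,c)=(Z,c^\sharp e_Z)$ with $c^\sharp e_Z=e_Z^\circ Tc\,e_Z$, and the 2-cell $\hat\alpha_c$ (or, more precisely, the reconstruction $e_Z^\circ Tc\,e_Z\cong c$ obtained from $\hat\alpha$ together with the triangle identities for $e_Z\dashv e_Z^\circ$) gives an invertible comparison; in fact I would expect the unit to be an isomorphism, which already signals that $A^\circ$ is fully faithful.

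The key steps, in order, are: (1) write down $\varepsilon$ and $\eta$ explicitly as above and check that each is a genuine $(\mT,\V)$-natural transformation, i.e.\ that the pasting diagram in the definition of $(\mT,\V)$-natural transformation commutes — this reduces, via the alternative description $(\zeta_0)$ in the excerpt, to checking commutativity of a single diagram of 2-cells in $\VRel$, which unwinds using (\ref{eq:lax}), (\ref{eq:oplax}), (\ref{eq:coh}) and (\ref{eq:nat}); (2) check 2-naturality of $\varepsilon$ and $\eta$ in the respective variables, i.e.\ naturality squares against arbitrary $(\mT,\V)$-functors and $\V$-functors, which is routine from the functoriality of $A^\circ$ and $A_e$ already asserted in the excerpt together with the definition of $\kappa_{s,f}^{-1}$; (3) verify the two triangle identities $\varepsilon A^\circ\cdot A^\circ\eta=\Id_{A^\circ}$ and $A_e\varepsilon\cdot\eta A_e=\Id_{A_e}$. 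For step (3) the point is that everything is built from the mates $\hat\alpha$ together with the adjunction units/counits $\lambda_{e_X},\rho_{e_X}$, so the triangle identities become consequences of the triangle identities for $e_X\dashv e_X^\circ$ in $\VRel$ combined with the coherence condition (\ref{eq:mon}) relating $\alpha$ to the monad multiplication and the commuting squares defining $c^\sharp$ in the excerpt.

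The main obstacle I expect is step (1), specifically the verification that the counit 2-cell $\varepsilon_{(X,a)}:(ae_X)^\sharp\to a$ is compatible with the full generalized-monad structure $(\eta,\mu)$ and not merely with the ``unit'' part — i.e.\ that the second (the $\mu$-) diagram in the definition of $(\mT,\V)$-natural transformation commutes. This is where the interaction between $\mu_a$, the oplax cell $\beta_a$, and the invertibility of $\kappa_{-,e_X}$ all come together, and it will require carefully pasting the coherence cells (\ref{eq:coh}) and (\ref{eq:mon}) and using that $T(f^\circ)=(Tf)^\circ$ (so that the formulas for $\kappa_{s,f}^{-1}$ and $\kappa_{f^\circ,t}^{-1}$ given in the excerpt are available). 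A secondary source of friction is purely bookkeeping: since associativity and unit isomorphisms in $\VRel$ are systematically suppressed, one must be disciplined about where the mates $\hat\alpha$ and the triangle cells $\lambda,\rho$ are inserted so that the diagrams actually typecheck. Once step (1) is done, steps (2) and (3) are comparatively mechanical, and the adjunction — indeed with $A^\circ$ fully faithful, since $\eta$ is invertible — follows.
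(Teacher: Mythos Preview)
There is a genuine gap in your construction of the counit. You propose a 2-cell $e_X^\circ T(ae_X)\to e_X^\circ\,Ta\,Te_X\to e_X^\circ\,Ta$ and then intend to invoke $\hat\alpha_a$ to reach $a$. The second arrow does not typecheck: $e_X^\circ\,Ta\,Te_X$ is a $\V$-relation $TX\relto X$ whereas $e_X^\circ\,Ta$ is a $\V$-relation $T^2X\relto X$, and there is no canonical 2-cell of the kind you describe. Moreover, $\hat\alpha_a:a\,e_{TX}^\circ\to e_X^\circ\,Ta$ points the wrong way and is not assumed invertible, so it cannot furnish a comparison landing in $a$. The paper's counit uses the $(\mT,\V)$-category structure of $(X,a)$ in an essential way: after the $\kappa^{-1}_{a,e_X}$ step one whiskers with $\eta_a$ to obtain $a\,e_X\,e_X^\circ\,Ta\,Te_X$, collapses $e_X\,e_X^\circ$ via $\rho_{e_X}$, and then applies the multiplication $\mu_a:a\,Ta\to a\,m_X$ to arrive at $a\,m_X\,Te_X=a$. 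Your proposal never invokes $\mu_a$, and without it the counit cannot be built.

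Two smaller issues. First, your expectation that the unit $c\to e_Z^\circ\,Tc\,e_Z$ is an isomorphism (and hence that $A^\circ$ is fully faithful) is not warranted by the standing hypotheses: neither $\lambda_{e_Z}$ nor $\alpha_c$ is assumed invertible. Second, in your step (1) the components of the unit and counit are a $\V$-functor and a $(\mT,\V)$-functor respectively, not $(\mT,\V)$-natural transformations; what must be checked are the functor axioms (\ref{eq:fun}), and the alternative description (\ref{eq:zeta0}) of natural transformations plays no role here.
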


\begin{proof}
The unit of the adjunction has the component at a $\V$-category $(Z,c)$ given by a $\V$-functor consisting of $1_Z$ and the 2-cell
\[\xymatrix{c \ar[r]^-{\lambda_{e_Z}-}& e_Z^\circ e_Zc \ar[r]^-{- \alpha_c} &e_Z^\circ Tc e_Z.}\]
The counit of the adjunction has the component at a $(\mT, \V)$-category $(X, a)$ given by a $(\mT, \V)$-functor consisting of $1_X$ and the 2-cell
\[\xymatrix{e_X^\circ T(ae_X) \ar[r]^-{-\kappa^{-1}_{a, e_X}}& e_X^\circ Ta Te_X \ar[r]^-{\eta_a -}& a e_X e_X^\circ TaTe_X \ar[r]^-{-\rho_{e_X}-}  &aTaTe_X\ar[r]^-{\mu_a-} &am_XTe_X = a.}\]
The triangle identities are then directly verified.
\end{proof}

The next proposition is a $(\mT, \V)$-categorical analogue of the ordinary- and enriched-categorical fact that an adjunction between functors induces isomorphisms between hom-sets/-objects.

\begin{prop}\label{th:adj}
Given an adjunction  $(f, \fii_f) \dashv (g, \fii_g) : (X, a) \rightarrow (Y, b)$ in the 2-category $\TVCat$, there is an isomorphism:
\[g^\circ a \cong bTf.\]
\end{prop}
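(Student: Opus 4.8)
The plan is to mirror the classical enriched argument. Conceptually the statement says that an adjunction $(f,\varphi_f)\dashv(g,\varphi_g)$ induces an isomorphism between the two canonical $(\mT,\V)$-modules attached to the two functors: passing (pseudofunctorially) to modules turns $f\dashv g$ into $f_\natural\dashv g_\natural$, and since $g_\natural\dashv g^\natural$ holds formally, uniqueness of adjoints gives $f_\natural\cong g^\natural$, which unravels to $g^\circ a\cong bTf$ --- the $(\mT,\V)$-version of the classical $Y(f-,-)\cong X(-,g-)$. As we have not introduced $(\mT,\V)$-modules, I would instead give this directly: construct comparison $2$-cells $\Phi\colon g^\circ a\to bTf$ and $\Psi\colon bTf\to g^\circ a$ in $\VRel$ and show they are mutually inverse.

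First I would unwind the adjunction data. The unit and counit are $(\mT,\V)$-natural transformations $(1_X,1_a)\to(gf,\varphi_{gf})$ and $(fg,\varphi_{fg})\to(1_Y,1_b)$, so by the definition of $(\mT,\V)$-natural transformation applied to these composite functors they are $2$-cells $\zeta_\eta\colon(gf)a\to a$ and $\zeta_\varepsilon\colon b\to bT(fg)$ satisfying the coherence with $\varphi_f$, $\varphi_g$ built into that definition. Using the ``$\zeta_0$''-presentations recalled above and the adjunctions $f\dashv f^\circ$, $g\dashv g^\circ$ in $\VRel$, the same data can be recorded as $2$-cells $f\to g^\circ ae_X$ and $(fg)^\circ\to be_Y$ --- the $(\mT,\V)$-incarnations of the components ``$x\to gfx$'' and ``$fgy\to y$''. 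The two triangle identities of the adjunction become two equations among these.

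Next I would assemble the comparisons. Mating $\varphi_f\colon fa\to bTf$ along $f\dashv f^\circ$ gives $a\to f^\circ bTf$; whiskering on the left by $g^\circ$ yields $g^\circ a\to g^\circ f^\circ bTf=(fg)^\circ bTf$; and whiskering by $Tf$ the ``precompose with the counit'' cell $(fg)^\circ b\to b$ (assembled from $\zeta_\varepsilon$ together with $\alpha_b$, $\mu_b$ and the monad law $m_Y e_{TY}=1$) produces $\Phi$. Symmetrically, mating $\varphi_g\colon gb\to aTg$ along $g\dashv g^\circ$ gives $b\to g^\circ aTg$; whiskering by $Tf$ and using $TgTf=T(gf)$ gives $bTf\to g^\circ aT(gf)$; and whiskering by $g^\circ$ the ``precompose with the unit'' cell $aT(gf)\to a$ (assembled from $\zeta_\eta$ together with $\kappa^{-1}_{a,e_X}$, which is invertible since $e_X$ is a map, $\mu_a$ and the monad law $m_X Te_X=1$) produces $\Psi$.

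The final step --- which I expect to be the real obstacle --- is to verify $\Psi\circ\Phi=1_{g^\circ a}$ and $\Phi\circ\Psi=1_{bTf}$. Each composite is first rewritten, using the axioms (fun) for $\varphi_f$, $\varphi_g$ and the coherence of $\zeta_\eta$, $\zeta_\varepsilon$ with them, so that the two functoriality cells merge into a single ``$gf$-'' resp.\ ``$fg$-functoriality'' cell flanked by (co)unit components; the coherence axioms (mon), (coh), (nat) together with the monad laws then absorb the structural $2$-cells $\alpha,\beta,\kappa,\lambda,\rho$; and finally the two triangle identities collapse what remains to an identity. This diagram chase --- in effect a by-hand rerun, inside $\VRel$, of the ``uniqueness of adjoints'' argument sketched above --- is the bulk of the proof.
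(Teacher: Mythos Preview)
Your proposal is correct and follows essentially the same route as the paper. The paper, too, constructs a pair of comparison $2$-cells $bTf\to g^\circ a$ and $g^\circ a\to bTf$ from the unit/counit data $\upsilon_0\colon gf\to ae_X$ and $\epsilon_0\colon 1_Y\to be_Yfg$ together with $\varphi_f,\varphi_g,\alpha_b,\mu_a,\mu_b$ and the $\V\text{-}\mathrm{Rel}$ adjunctions $f\dashv f^\circ$, $g\dashv g^\circ$, and then verifies the two composites are identities by the diagram chase you outline (using \eqref{eq:fun}, \eqref{eq:cat}, \eqref{eq:mon}, naturality of $\alpha$, and one triangle identity of the given adjunction). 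Your mate-based packaging of the two cells unwinds, via interchange and the triangle identity $(\rho_f f)(f\lambda_f)=1_f$, to exactly the paper's explicit pastings; the only cosmetic difference is that the paper works directly with the $\zeta_0$-forms $\upsilon_0,\epsilon_0$ rather than with $\zeta_\eta,\zeta_\varepsilon$, so when you say the auxiliary cells are ``assembled from $\zeta_\eta$'' resp.\ ``$\zeta_\varepsilon$'' you should really pass through their $\zeta_0$-presentations first.
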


\begin{proof}
The unit and the  counit of the given adjunction are $(\mT, \V)$-natural transformations $(1_X, 1_a) \to (g, \fii_g)(f, \fii_f)$ and $(f, \fii_f)(g, \fii_g) \to (1_Y, 1_b)$. These are given by 2-cells $\upsilon_0 : gf \to ae_X$ and $\epsilon_0: 1_Y \to be_Yfg$ respectively. Define a 2-cell $bTf \to  g^\circ a$ by
\[
\xymatrix@=\sizexy{
TX \ar[rr]^{Tf} \ar[d]_{Te_X} \ar `l^d[dd]`^r[dd]_{1_{TX}} [dd] \ar@{}[rrd]|{\mbox{\large $\stackrel{-T\upsilon_0-}{\Leftarrow}$}}&& TY \ar[rr]|-{\object@{|}}^{b} \ar[d]_{Tg} \ar@{}[rrd]|{\mbox{\large$\stackrel{\fii_g}{\Leftarrow}$ }}&& Y \ar[d]_{g} \ar[rr]^{1_Y}  \ar@{}[rd]+UUU|{\mbox{\large $\stackrel{\lambda_g}{\Leftarrow}$}}&& Y,\\
T^2X \ar[d]_{m_X} \ar[rr]|-{\object@{|}}^{Ta} \ar@{}[rrrd]|{\mbox{\large $\stackrel{\mu_a}{\Leftarrow}$}}&& TX \ar[rr]|-{\object@{|}}^{a}&& X \ar@<-.3em>[rru]_{g^\circ}&& \\
TX \ar@/_1.2pc/[urrrr]|-{\object@{|}}_{a}  && && && \\
}
\]
wherein the blank symbols stand for the obvious instances of $\kappa$ or $\kappa^{-1}$.
 In the opposite direction define a 2-cell $g^\circ a \to bTf$ by
\[
\xymatrix@=\sizexy{
&& && && Y \ar[d]^{g} \ar@{}[ld]|{\mbox{\large $\stackrel{\rho_g}{\Leftarrow}$}} \ar@/^2pc/[dddd]^{1_Y}_{\mbox{\large $\stackrel{\epsilon_0}{\Leftarrow}$}}\\
TX \ar[rrrr]|-{\object@{|}}_{a} \ar[d]_{Tf} \ar@{}[rrrrd]|{\mbox{\large $\stackrel{\fii_f}{\Leftarrow}$}} && && X \ar[d]^{f} \ar[rr]^{1_X} \ar@<.3em>[rru]^{g^\circ} && X \ar[d]^{f} \\
TY\ar[rrrr]|-{\object@{|}}^{b} \ar[d]_{e_{TY}} \ar `l^d[dd]`^r[dd]_{1_{TY}} [dd] \ar@{}[rrrrd]|{\mbox{\large $\stackrel{\alpha_b}{\Leftarrow}$}}&&  && Y \ar[d]_{e_Y} && Y \ar[d]^{e_Y} \\
T^2Y \ar[rrrr]|-{\object@{|}}_{Tb}\ar[d]_{m_Y} \ar@{}[rrrrd]|{\mbox{\large $\stackrel{\mu_b}{\Leftarrow}$}}&&  && TY \ar[d]|-{\object@{|}}_{b} && TY \ar[d]|-{\object@{|}}^{b}\\
TY \ar[rrrr]|-{\object@{|}}_{b} && && Y \ar[rr]_{1_Y} && Y.
}
\]
These two 2-cells are inverses to each other. The following calculation shows that the equality $(bTf \to  g^\circ a \to  bTf) = 1_{bTf}$ holds. The remaining equation is proved using analogous arguments. Pasting the first diagram on top of the second, and using the equation $(\rho_gg) (g\lambda_g)= 1_g$ we obtain
\[
\xymatrix@=\sizexy{
TX \ar[rr]^{Tf} \ar[d]_{Te_X} \ar `l^d[dd]`^r[dd]_{1_{TX}} [dd] \ar@{}[rrd]|{\mbox{\large $\stackrel{-T\upsilon_0-}{\Leftarrow}$}}&& TY \ar[rr]|-{\object@{|}}^{b} \ar[d]_{Tg} \ar@{}[rrd]|{\mbox{\large$\stackrel{\fii_g}{\Leftarrow}$ }}&& Y  \ar[d]_{g} \ar@/^2pc/[ddddd]^{1_Y}_{\mbox{\large $\stackrel{\epsilon_0}{\Leftarrow}$}} \\
T^2X \ar[d]_{m_X} \ar[rr]|-{\object@{|}}^{Ta} \ar@{}[rrrd]|{\mbox{\large $\stackrel{\mu_a}{\Leftarrow}$}}&& TX \ar[rr]|-{\object@{|}}^{a}&& X \ar[dd]_{f} \\
TX \ar@/_1.2pc/[urrrr]|-{\object@{|}}_{a} \ar[d]_{Tf} \ar@{}[rrrrd]|{\mbox{\large $\stackrel{\fii_f}{\Leftarrow}$}} && &&  \\
TY\ar[rrrr]|-{\object@{|}}^{b} \ar[d]_{e_{TY}} \ar `l^d[dd]`^r[dd]_{1_{TY}} [dd] \ar@{}[rrrrd]|{\mbox{\large $\stackrel{\alpha_b}{\Leftarrow}$}}&&  && Y \ar[d]_{e_Y} \\
T^2Y \ar[rrrr]|-{\object@{|}}_{Tb}\ar[d]_{m_Y} \ar@{}[rrrrd]|{\mbox{\large $\stackrel{\mu_b}{\Leftarrow}$}}&&  && TY \ar[d]|-{\object@{|}}_{b} \\
TY \ar[rrrr]|-{\object@{|}}_{b} && && Y;
}
\]
using (\ref{eq:fun}) for $(f, \fii_f)$ we get
\[
\xymatrix@=\sizexy{
&TX \ar[rr]^{Tf} \ar[d]_{Te_X} \ar@/_1.5pc/[ldd]_{1_{TX}} \ar@{}[rrd]|{\mbox{\large $\stackrel{-T\upsilon_0-}{\Leftarrow}$}}&& TY \ar[rr]|-{\object@{|}}^{b} \ar[d]_{Tg} \ar@{}[rrd]|{\mbox{\large$\stackrel{\fii_g}{\Leftarrow}$ }}&& Y  \ar[d]_{g} \ar@/^2pc/[ddddd]^{1_Y}_{\mbox{\large $\stackrel{\epsilon_0}{\Leftarrow}$}} \\
&T^2X \ar[d]_{T^2f} \ar[rr]|-{\object@{|}}^{Ta} \ar@{}[rrd]|{\mbox{\large $\stackrel{-T\fii_f-}{\Leftarrow}$}} \ar[ld]_{m_X}&& TX \ar[rr]|-{\object@{|}}^{a} \ar[d]_{Tf} \ar@{}[rrd]|{\mbox{\large $\stackrel{\fii_f}{\Leftarrow}$}}&& X \ar[d]_{f} \\
TX \ar[rd]_{Tf} &T^2Y \ar[rr]|-{\object@{|}}^{Tb} \ar[d]_{m_Y} \ar@{}[rrrd]|{\mbox{\large $\stackrel{\mu_b}{\Leftarrow}$}} &&TY \ar[rr]|-{\object@{|}}^{b} && Y \ar[dd]_{e_Y}\\
&TY \ar[d]_{e_{TY}} \ar@/_1.2pc/[urrrr]|-{\object@{|}}_{b} \ar `l^d[dd]`^r[dd]_{1_{TY}} [dd] \ar@{}[rrrrd]|{\mbox{\large $\stackrel{\alpha_b}{\Leftarrow}$}}&&  &&  \\
&T^2Y \ar[rrrr]|-{\object@{|}}_{Tb}\ar[d]_{m_Y} \ar@{}[rrrrd]|{\mbox{\large $\stackrel{\mu_b}{\Leftarrow}$}}&&  && TY \ar[d]|-{\object@{|}}_{b} \\
&TY \ar[rrrr]|-{\object@{|}}_{b} && && Y.
}
\]
Then, using naturality of $\alpha$ we obtain
\[
\xymatrix@=\sizexy{
&TX \ar[rr]^{Tf} \ar[d]_{Te_X} \ar@/_1.5pc/[ldd]_{1_{TX}} \ar@{}[rrd]|{\mbox{\large $\stackrel{-T\upsilon_0-}{\Leftarrow}$}}&& TY \ar[rr]|-{\object@{|}}^{b} \ar[d]_{Tg} \ar@{}[rrd]|{\mbox{\large$\stackrel{\fii_g}{\Leftarrow}$ }}&& Y  \ar[d]_{g} \ar@/^2pc/[ddddd]^{1_Y}_{\mbox{\large $\stackrel{\epsilon_0}{\Leftarrow}$}} \\
&T^2X \ar[d]_{T^2f} \ar[rr]|-{\object@{|}}^{Ta} \ar@{}[rrd]|{\mbox{\large $\stackrel{-T\fii_f-}{\Leftarrow}$}} \ar[ld]_{m_X}&& TX \ar[rr]|-{\object@{|}}^{a} \ar[d]_{Tf} \ar@{}[rrd]|{\mbox{\large $\stackrel{\fii_f}{\Leftarrow}$}}&& X \ar[d]_{f} \\
TX \ar[d]_{Tf} &T^2Y \ar[ld]_{m_Y} \ar[rr]|-{\object@{|}}^{Tb} \ar[d]_{e_{T^2Y}} \ar@{}[rrd]|{\mbox{\large $\stackrel{\alpha_{Tb}}{\Leftarrow}$}} &&TY \ar[d]_{e_{TY}} \ar[rr]|-{\object@{|}}^{b} \ar@{}[rrd]|{\mbox{\large $\stackrel{\alpha_{b}}{\Leftarrow}$}}&& Y \ar[d]_{e_Y}\\
TY \ar@/_1.5pc/[rdd]_{1_{TY}} \ar[rd]_{e_{TY}}&T^3Y \ar[rr]|-{\object@{|}}^{T^2b} \ar[d]_{Tm_Y} \ar@{}[rrrd]|{\mbox{\large $\stackrel{-T\mu_b-}{\Leftarrow}$}}&& T^2Y \ar[rr]|-{\object@{|}}^{Tb}&& TY  \ar[dd]|-{\object@{|}}_{b}\\
&T^2Y \ar[d]_{m_Y}  \ar@/_1.2pc/[urrrr]|-{\object@{|}}_{Tb}\ar@{}[rrrrd]|{\mbox{\large $\stackrel{\mu_b}{\Leftarrow}$}}&&  &&  \\
&TY \ar[rrrr]|-{\object@{|}}_{b} && && Y,
}
\]
and using the associativity axiom in (\ref{eq:cat}) for $\mu_b$  we get
\[
\xymatrix@=\sizexy{
&TX \ar[rr]^{Tf} \ar[d]_{Te_X} \ar `l /25pt[ld]`[ddddd]_{Tf} [ddddd]  \ar@{}[rrd]|{\mbox{\large $\stackrel{-T\upsilon_0-}{\Leftarrow}$}}&& TY \ar[rr]|-{\object@{|}}^{b} \ar[d]_{Tg} \ar@{}[rrd]|{\mbox{\large$\stackrel{\fii_g}{\Leftarrow}$ }}&& Y  \ar[d]_{g} \ar@/^2pc/[dddd]^{1_Y}_{\mbox{\large $\stackrel{\epsilon_0}{\Leftarrow}$}} \\
&T^2X \ar[d]_{T^2f} \ar[rr]|-{\object@{|}}^{Ta} \ar@{}[rrd]|{\mbox{\large $\stackrel{-T\fii_f-}{\Leftarrow}$}} && TX \ar[rr]|-{\object@{|}}^{a} \ar[d]_{Tf} \ar@{}[rrd]|{\mbox{\large $\stackrel{\fii_f}{\Leftarrow}$}}&& X \ar[d]_{f} \\
&T^2Y \ar[rr]|-{\object@{|}}^{Tb} \ar[d]_{e_{T^2Y}} \ar@{}[rrd]|{\mbox{\large $\stackrel{\alpha_{Tb}}{\Leftarrow}$}} &&TY \ar[d]_{e_{TY}} \ar[rr]|-{\object@{|}}^{b} \ar@{}[rrd]|{\mbox{\large $\stackrel{\alpha_{b}}{\Leftarrow}$}}&& Y \ar[d]_{e_Y}\\
&T^3Y \ar[rr]|-{\object@{|}}^{T^2b} \ar[d]+<-1em,.8em>_{Tm_Y} \ar@<.5em>[d]^{m_{TY}} \ar@{}[rrd]|{\mbox{\large $\stackrel{-T\mu_b-}{\Leftarrow}$}}&& T^2Y \ar[d]_{m_Y} \ar[rr]|-{\object@{|}}^{Tb} \ar@{}[rrd]|{\mbox{\large $\stackrel{\mu_b}{\Leftarrow}$}} && TY  \ar[d]|-{\object@{|}}_{b}\\
&T^2Y \enspace T^2Y \ar@<.5em>[d]^{m_Y} \ar[rr]|-{\object@{|}}^{Tb} \ar@{}[rrrd]|{\mbox{\large $\stackrel{\mu_b}{\Leftarrow}$}}&& TY \ar[rr]|-{\object@{|}}^{b} && Y. \\
& TY \ar@/_1.2pc/[urrrr]|-{\object@{|}}_{b} \ar[u]+<-1em,-.8em>;[]_{m_Y}&& &&
}
\]
From (\ref{eq:mon}) we obtain
\[
\xymatrix@=\sizexy{
&TX \ar[rr]^{Tf} \ar[d]_{Te_X} \ar `l /25pt[ld]`[ddddd]_{Tf} [ddddd]  \ar@{}[rrd]|{\mbox{\large $\stackrel{-T\upsilon_0-}{\Leftarrow}$}}&& TY \ar[rr]|-{\object@{|}}^{b} \ar[d]_{Tg} \ar@{}[rrd]|{\mbox{\large$\stackrel{\fii_g}{\Leftarrow}$ }}&& Y  \ar[d]_{g} \ar@/^2pc/[dddd]^{1_Y}_{\mbox{\large $\stackrel{\epsilon_0}{\Leftarrow}$}} \\
&T^2X \ar[d]_{T^2f} \ar[rr]|-{\object@{|}}^{Ta} \ar@{}[rrd]|{\mbox{\large $\stackrel{-T\fii_f-}{\Leftarrow}$}} && TX \ar[rr]|-{\object@{|}}^{a} \ar[d]_{Tf} \ar@{}[rrd]|{\mbox{\large $\stackrel{\fii_f}{\Leftarrow}$}}&& X \ar[d]_{f} \\
&T^2Y \ar@/^1.5pc/[dd]^(0.4){1_{T^2Y}} \ar[rr]|-{\object@{|}}^{Tb} \ar[d]_{e_{T^2Y}} &&TY \ar@/_1.5pc/[dd]_(0.6){1_{T^2Y}} \ar[d]^{e_{TY}} \ar[rr]|-{\object@{|}}^{b} \ar@{}[rrd]|{\mbox{\large $\stackrel{\alpha_{b}}{\Leftarrow}$}}&& Y \ar[d]_{e_Y}\\
&T^3Y \ar[d]_{m_{TY}} && T^2Y \ar[d]^{m_Y} \ar[rr]|-{\object@{|}}^{Tb} \ar@{}[rrd]|{\mbox{\large $\stackrel{\mu_b}{\Leftarrow}$}} && TY  \ar[d]|-{\object@{|}}_{b}\\
&T^2Y \ar[d]_{m_Y} \ar[rr]|-{\object@{|}}^{Tb} \ar@{}[rrrd]|{\mbox{\large $\stackrel{\mu_b}{\Leftarrow}$}}&& TY \ar[rr]|-{\object@{|}}^{b} && Y, \\
& TY \ar@/_1.2pc/[urrrr]|-{\object@{|}}_{b}&& &&
}
\]
and the axiom of a $(\mT, \V)$-natural transformation for $\epsilon_0$ gives
\[
\xymatrix@=\sizexy{
&TX \ar[rr]^{Tf} \ar[d]_{Te_X} \ar `l /25pt[ld]`[ddddd]_{Tf} [ddddd]  \ar@{}[rrd]|{\mbox{\large $\stackrel{-T\upsilon_0-}{\Leftarrow}$}}&& TY \ar[d]^{Tg} \ar@/^1.5pc/[rrddd]^{1_{TY}}&& \\
&T^2X \ar[d]_{T^2f} \ar[rr]|-{\object@{|}}^{Ta} \ar@{}[rrd]|{\mbox{\large $\stackrel{-T\fii_f-}{\Leftarrow}$}} && TX \ar[d]^{Tf} && \\
&T^2Y \ar[dd]_{1_{T^2Y}} \ar[rr]|-{\object@{|}}^{Tb} &&TY \ar@/_1.5pc/[dd]_(0.6){1_{T^2Y}} \ar[d]^{Te_Y} \ar@{}[rr]|{\mbox{\large $\stackrel{-T\epsilon_0-}{\Leftarrow}$}}&& \\
& && T^2Y \ar[d]^{m_Y} \ar[rr]|-{\object@{|}}^{Tb} \ar@{}[rrd]|{\mbox{\large $\stackrel{\mu_b}{\Leftarrow}$}} && TY  \ar[d]|-{\object@{|}}_{b}\\
&T^2Y \ar[d]^{m_Y} \ar[rr]|-{\object@{|}}^{Tb} \ar@{}[rrrd]|{\mbox{\large $\stackrel{\mu_b}{\Leftarrow}$}}&& TY \ar[rr]|-{\object@{|}}^{b} && Y. \\
& TY \ar@/_1.2pc/[urrrr]|-{\object@{|}}_{b}&& &&
}
\]
Using (\ref{eq:mon}) again we obtain
\[
\xymatrix@=\sizexy{
&TX \ar[rr]^{Tf} \ar[d]_{Te_X} \ar `l /25pt[ld]`[ddddd]_{Tf} [ddddd]  \ar@{}[rrd]|{\mbox{\large $\stackrel{-T\upsilon_0-}{\Leftarrow}$}}&& TY \ar[d]^{Tg} \ar@/^1.5pc/[rrddd]^{1_{TY}}&& \\
&T^2X \ar[d]_{T^2f} \ar[rr]|-{\object@{|}}^{Ta} \ar@{}[rrd]|{\mbox{\large $\stackrel{-T\fii_f-}{\Leftarrow}$}} && TX \ar[d]^{Tf} && \\
&T^2Y \ar@/_1.3pc/[dd]_{1_{T^2Y}} \ar[d]^{Te_{TY}} \ar[rr]|-{\object@{|}}^{Tb} \ar@{}[rrd]|{\mbox{\large $\stackrel{-T\alpha_b-}{\Leftarrow}$}} &&TY \ar[d]^{Te_Y} \ar@{}[rr]|{\mbox{\large $\stackrel{-T\epsilon_0-}{\Leftarrow}$}}&& \\
&T^3Y \ar[d]^{m_{TY}} \ar[rr]|-{\object@{|}}^{T^2b} \ar@{}[rrd]|{\mbox{\large $\stackrel{\beta_b}{\Leftarrow}$}}&& T^2Y \ar[d]^{m_Y} \ar[rr]|-{\object@{|}}^{Tb} \ar@{}[rrd]|{\mbox{\large $\stackrel{\mu_b}{\Leftarrow}$}} && TY  \ar[d]|-{\object@{|}}_{b}\\
&T^2Y \ar[d]^{m_Y} \ar[rr]|-{\object@{|}}^{Tb} \ar@{}[rrrd]|{\mbox{\large $\stackrel{\mu_b}{\Leftarrow}$}}&& TY \ar[rr]|-{\object@{|}}^{b} && Y, \\
& TY \ar@/_1.2pc/[urrrr]|-{\object@{|}}_{b}&& &&
}
\]
and using associativity of $\mu_b$ again we get
\[
\xymatrix@=\sizexy{
&TX \ar[rr]^{Tf} \ar[d]_{Te_X} \ar `l /25pt[ld]`[ddddd]_{Tf} [ddddd]  \ar@{}[rrd]|{\mbox{\large $\stackrel{-T\upsilon_0-}{\Leftarrow}$}}&& TY \ar[d]^{Tg} \ar@/^1.5pc/[rrddd]^{1_{TY}}&& \\
&T^2X \ar[d]_{T^2f} \ar[rr]|-{\object@{|}}^{Ta} \ar@{}[rrd]|{\mbox{\large $\stackrel{-T\fii_f-}{\Leftarrow}$}} && TX \ar[d]^{Tf} && \\
&T^2Y \ar[d]_{Te_{TY}} \ar[rr]|-{\object@{|}}^{Tb} \ar@{}[rrd]|{\mbox{\large $\stackrel{-T\alpha_b-}{\Leftarrow}$}} &&TY \ar[d]^{Te_Y} \ar@{}[rr]|{\mbox{\large $\stackrel{-T\epsilon_0-}{\Leftarrow}$}}&& \\
&T^3Y \ar[d]+<-1em,.8em>_{m_{TY}} \ar@<.5em>[d]^{Tm_Y} \ar[rr]|-{\object@{|}}^{T^2b} \ar@{}[rrd]|{\mbox{\large $\stackrel{-T\mu_b-}{\Leftarrow}$}}&& T^2Y \ar[rr]|-{\object@{|}}^{Tb} && TY  \ar[d]|-{\object@{|}}_{b}\\
&T^2Y \enspace T^2Y \ar@<.5em>[d]^{m_Y} \ar@/^-1.2pc/[rrrru]|-{\object@{|}}^{Tb} \ar@{}[rrrd]|{\mbox{\large $\stackrel{\mu_b}{\Leftarrow}$}}&&  && Y. \\
& TY \ar[u]+<-1em,-.8em>;[]_{m_Y} \ar@/_1.2pc/[urrrr]|-{\object@{|}}_{b}&& &&
}
\]
Now, one of the triangle equations satisfied by the unit $\upsilon_0$ and the counit $\epsilon_0$ of our adjunction gives us
\[
\xymatrix@=\sizexy{
&TX \ar[rr]^{Tf} \ar[d]_{Te_X} \ar `l /25pt[ld]`[ddddd]_{Tf} [ddddd] \ar@{}[rrrrddd]|{\mbox{\large $\stackrel{-T\eta_b-}{\Leftarrow}$}} && TY \ar[lldd]^{Te_Y} \ar@/^1.5pc/[rrddd]^{1_{TY}}&& \\
&T^2X \ar[d]_{T^2f} &&  && \\
&T^2Y \ar[d]_{Te_{TY}} \ar[rrrrd]|-{\object@{|}}^{Tb} \ar@/^1.5pc/[dd]^{1_{TY}} && && \\
&T^3Y  \ar[d]_{Tm_Y} && && TY  \ar[d]|-{\object@{|}}_{b}\\
&T^2Y \ar[d]_{m_Y} \ar@/^-1.2pc/[rrrru]|-{\object@{|}}^{Tb} \ar@{}[rrrd]|{\mbox{\large $\stackrel{\mu_b}{\Leftarrow}$}}&&  && Y, \\
& TY \ar@/_1.2pc/[urrrr]|-{\object@{|}}_{b}&& &&
}
\]
and finally, by the unity axiom in (\ref{eq:cat}), this equals to
\[
\xymatrix@=1.5em{
&TX \ar[rr]^{Tf} \ar[dd]_{Tf} && TY \ar[ld]_{Te_Y} \ar[dd]|-{\object@{|}}^{b} \\
&&T^2Y \ar[ld]_{m_Y}  &  \\
& TY \ar[rr]|-{\object@{|}}^{b}&& Y,
}
\]
which is the identity map $1_{bTf}$.

We leave it to the reader to verify the equality $(g^\circ a \to  bTf \to g^\circ a) = 1_{g^\circ a}$.
\end{proof}

\section{$\mT$ as a $\VCat$ monad}\label{sect:VCatMonad}

In this section we show that the properties of the lax extension of the $\Set$-monad $\mT$ to $\VRel$ allow us to extend $\mT$ to $\VCat$. Straightforward calculations show that:
\begin{lemma}
\begin{enumerate}[\em (1)]
\item If $(X,a,\eta_a,\mu_a)$ is a $\V$-category, then $(TX,Ta,T\eta_a,T\mu_a\kappa_{a,a})$ is a $\V$-category.
\item If $(f,\fii_f):(X,a,\eta_a,\mu_a)\to(Y,b,\eta_b,\mu_b)$ is a $\V$-functor, then $(Tf,\fii_{Tf}):(TX,Ta)\to(TY,Tb)$, where $\fii_{Tf}:=\kappa^{-1}_{b,f}  \, T\fii_f \,  \kappa_{f,a}$, is a $\V$-functor as well.
\item If $\zeta : (f, \fii_f) \to (g, \fii_g)$ is a $\V$-natural transformation, then so is $\kappa^{-1}_{b,f} T\zeta  \, \kappa_{g,a}:(Tf, \fii_{Tf}) \to (g, \fii_{Tg})$.
\end{enumerate}
\end{lemma}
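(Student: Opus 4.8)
The plan is to verify the three statements by direct diagram-chasing in the bicategory $\VRel$, using the structural axioms \eqref{eq:lax}, \eqref{eq:oplax}, \eqref{eq:mon}, \eqref{eq:coh}, \eqref{eq:nat} for the lax extension of $\mT$, together with the fact (recorded in the excerpt) that $\kappa_{s,f}$ and $\kappa_{f^\circ,t}$ are invertible for $\Set$-maps $f$. Since the Lemma asserts that three explicitly defined data are $\V$-category, $\V$-functor, and $\V$-natural-transformation structures respectively, the only content is to check that they satisfy the corresponding axioms, i.e.\ the $\mT$-free instances of \eqref{eq:cat} (with $\mT$ the identity monad), \eqref{eq:fun}, and the $\V$-naturality square.

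For part (1), I would check that $(TX, Ta, T\eta_a, T\mu_a\kappa_{a,a})$ satisfies the unit and associativity laws of a $\V$-category. The unit laws: $T\eta_a$ post-/pre-composed appropriately with $Ta$ must give identities; this follows by applying the lax functor $T$ to the unit laws for $(X,a,\eta_a,\mu_a)$ and using $T1 = 1$, $T\kappa$-coherence, and $\kappa_{a,1}=1=\kappa_{1,a}$. For associativity, the key computation is that the two ways of composing $T\mu_a$ with $\kappa_{a,a}$ around $TaTaTa$ agree; here one applies $T$ to the associativity law for $\mu_a$ and then uses the right-hand pentagon in \eqref{eq:lax} (the coherence $\kappa_{ts,r}\cdot(\kappa_{t,s}-)=\kappa_{t,sr}\cdot(-\kappa_{s,r})$) to reorganise the nested $\kappa$'s. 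This is the step I expect to be the main obstacle, since keeping track of the iterated $\kappa$'s and the suppressed associativity isomorphisms requires care, but it is a routine pentagon-chase once set up.

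For part (2), I would verify that $(Tf, \fii_{Tf})$ with $\fii_{Tf} = \kappa^{-1}_{b,f}\, T\fii_f\, \kappa_{f,a}$ satisfies the two diagrams \eqref{eq:fun} (again in the $\mT$-trivial form, i.e.\ the $\V$-functor axioms). The unit axiom for $(Tf,\fii_{Tf})$ is obtained by applying $T$ to the unit axiom for $(f,\fii_f)$ and conjugating by the appropriate invertible $\kappa$'s; the compatibility-with-composition axiom follows similarly by applying $T$ to the corresponding axiom for $(f,\fii_f)$, together with the naturality square for $\kappa$ (left square of \eqref{eq:lax}) to slide $\kappa$ past $T\fii_f$ and $T\mu_a$. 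Part (3) is entirely analogous: the $\V$-naturality square for $\kappa^{-1}_{b,f}\, T\zeta\, \kappa_{g,a}$ is the image under $T$ of the $\V$-naturality square for $\zeta$, conjugated by invertible $\kappa$'s, where one again invokes \eqref{eq:lax} to commute $\kappa$ past the relevant 2-cells. In all three parts the pattern is the same — ``apply $T$, then use $\kappa$-coherence and invertibility'' — which is why the authors describe the calculations as straightforward; I would present only the associativity chase of part (1) in any detail and leave the rest to the reader.
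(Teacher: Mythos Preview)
Your proposal is correct and matches the paper's approach: the paper gives no proof beyond the phrase ``Straightforward calculations show that'', and what you outline is precisely that straightforward calculation---apply $T$ to the axioms of the original data and use naturality and coherence of $\kappa$ from \eqref{eq:lax} to reorganise. Your sketch of the associativity chase in part~(1) via the pentagon $\kappa_{a,aa}\cdot(Ta\,\kappa_{a,a})=\kappa_{aa,a}\cdot(\kappa_{a,a}\,Ta)$ is exactly right, and the remark that parts~(2) and~(3) follow the same ``apply $T$, conjugate by invertible $\kappa$'' pattern is accurate.
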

\noindent These assignments define an endo 2-functor on $\VCat$ that we denote again by $T:\VCat\to\VCat$. The 2-cells $\alpha, \beta$ of the oplax natural transformations $e, m$ on $\VRel$ equip $e$ and $m$ so that they become natural transformations in $\VCat$, as we show next.

\begin{lemma}
For each $\V$-category $(X,a)$:
\begin{enumerate}[\em (1)]
\item $(e_X,\alpha_a):(X,a)\to(TX,Ta)$ is a $\V$-functor;
\item $(m_X,\beta_a):(T^2X,T^2a)\to(TX,Ta)$ is a $\V$-functor.
\end{enumerate}
\end{lemma}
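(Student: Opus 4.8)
The plan is to unfold, for each of the two candidate maps, the two conditions required of a $\V$-functor — that is, the commutativity of the two diagrams in (\ref{eq:fun}) in the case of the identity monad, where every occurrence of $\kappa$ and of the ambient $e,m$ collapses to an identity — and to recognise the resulting equalities of $2$-cells as direct instances of the coherence data (\ref{eq:nat}) and (\ref{eq:coh}) carried by the lax extension $\mT$. By the preceding lemma, the $\V$-categories $(TX,Ta)$ and $(T^2X,T^2a)$ carry unit $2$-cells $T\eta_a$, $T^2\eta_a$ and multiplication $2$-cells $T\mu_a\,\kappa_{a,a}$, $T^2\mu_a\,T\kappa_{a,a}\,\kappa_{Ta,Ta}$ respectively, and it is against these that the conditions must be verified.

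For (1): the unit condition for $(e_X,\alpha_a)\colon(X,a)\to(TX,Ta)$ asserts $\alpha_a\cdot(e_X\eta_a)=(T\eta_a)\,e_X$, which is exactly (\ref{eq:nat}) applied to the $2$-cell $\eta_a\colon 1_X\to a$, once one recalls that $\alpha_{1_X}=1_{e_X}$ since $1_X$ is a $\Set$-map. The composition condition asserts $\alpha_a\cdot(e_X\mu_a)=\bigl((T\mu_a\,\kappa_{a,a})\,e_X\bigr)\cdot(Ta\,\alpha_a)\cdot(\alpha_a\,a)$; here I would first pull the whiskered $2$-cell $(T\mu_a)\,e_X$ out of the right-hand side, then note that the remaining composite $(\kappa_{a,a}\,e_X)\cdot(Ta\,\alpha_a)\cdot(\alpha_a\,a)$ equals $\alpha_{aa}$ by the first square of (\ref{eq:coh}) specialised to $s=r=a$, and finally apply (\ref{eq:nat}) once more — now to the $2$-cell $\mu_a\colon aa\to a$ — to obtain $\alpha_a\cdot(e_X\mu_a)=\bigl((T\mu_a)\,e_X\bigr)\cdot\alpha_{aa}$, as required.

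Part (2) proceeds in exactly the same way with $\alpha$ replaced by $\beta$ throughout: the unit condition $\beta_a\cdot(m_X\,T^2\eta_a)=(T\eta_a)\,m_X$ is (\ref{eq:nat}) for $\beta$ applied to $\eta_a$ (using $\beta_{1_X}=1_{m_X}$), while the composition condition, after pulling out $(T\mu_a)\,m_X$ and matching the remaining composite — which this time also involves $\kappa_{Ta,Ta}$ and $T\kappa_{a,a}$ — against the second square of (\ref{eq:coh}) at $s=r=a$, reduces to $\beta_a\cdot(m_X\,T^2\mu_a)=\bigl((T\mu_a)\,m_X\bigr)\cdot\beta_{aa}$, i.e.\ to (\ref{eq:nat}) for $\beta$ and $\mu_a$. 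It is worth observing that neither the category axioms (\ref{eq:cat}) for $(X,a)$ nor the conditions (\ref{eq:mon}) are needed here; they enter only in the preceding lemma, which is precisely what guarantees that $(TX,Ta)$ and $(T^2X,T^2a)$ are $\V$-categories to begin with.

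The only point requiring genuine care is the bookkeeping: lining up the whiskerings, and the unit and associativity isomorphisms of $\VRel$ that are being suppressed, with the precise shapes of (\ref{eq:nat}) and (\ref{eq:coh}). Once the $2$-cells are matched correctly, every step is a literal specialisation ($s=r=a$ in (\ref{eq:coh}); $r=1_X$ or $r=aa$ in (\ref{eq:nat})), so no substantive obstacle remains.
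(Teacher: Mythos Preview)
Your proposal is correct and follows essentially the same route as the paper: the unit conditions are instances of (\ref{eq:nat}) applied to $\eta_a$ (using $\alpha_{1_X}=1$ and $\beta_{1_X}=1$), and each composition condition is split into an application of (\ref{eq:coh}) at $s=r=a$ followed by (\ref{eq:nat}) applied to $\mu_a$. Your explicit identification of the $2$-cell $\alpha_{aa}$ (resp.\ $\beta_{aa}$) as the intermediate step is exactly the decomposition the paper indicates with its labelled subdiagrams.
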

\begin{proof}
To check that the diagrams
\[\xymatrix@!C=15ex{e_X\ar[r]^{-\eta_a}\ar[d]_{T\eta_a-}&e_Xa\ar[ld]^{\alpha_a}&
m_X\ar[r]^-{-\eta_{T^2a}}\ar[d]_{\eta_{Ta}-}&m_XT^2a\ar[ld]^{\beta_a}\\
Tae_X&&Tam_X}\]
commute one uses the naturality conditions (\ref{eq:nat}) with respectively $\varphi=\eta$ and $\varphi=\beta$.
For the diagrams
\[\xymatrix@!C=100pt{e_Xaa\ar[rr]^{-\mu_a}\ar[d]_{\alpha_a-}\ar[rdd]^{\alpha_{a,a}}&&e_Xa\ar[dd]^{\alpha_a}\\
Tae_Xa\ar[d]_{-\alpha_a}\\
TaTae_X\ar@{}[rruu]^(0.2){\framebox{1}}\ar@{}[rruu]|{\framebox{2}}\ar[r]^{\kappa_{a,a}-}&T(aa)e_X\ar[r]^{T\mu_a-}&Tae_X\\
m_XT^2aT^2a\ar[r]^{-\kappa_{Ta,Ta}}\ar[d]_{\beta_a-}&m_XT(TaTa)\ar[r]^{-T\kappa_{a,a}}&m_XT^2(aa)
\ar[r]^{-T^2\mu_a}\ar[dd]^{\beta_{aa}}&m_XT^2a\ar[dd]^{\beta_a}\\
Tam_XT^2a\ar[d]_{-\beta_a}\\
TaTam_X\ar@{}[rruu]|{\framebox{3}}\ar[rr]^{\kappa_{a,a}-}&&T(aa)m_X\ar@{}[ruu]|{\framebox{4}}\ar[r]^{T\mu_a-}&Tam_X,}\]
commutativity of $\framebox{1}$ and $\framebox{3}$ follows from the coherence conditions (\ref{eq:coh}), while commutativity of $\framebox{2}$ and $\framebox{4}$ follows from the naturality conditions (\ref{eq:nat}).
\end{proof}

\begin{lemma}
For each $\V$-category $(X,a)$, let $e_{(X,a)}=(e_X,\alpha_a)$ and $m_{(X,a)}=(m_X,\beta_a)$.
\begin{enumerate}[\em (1)]
\item $e=(e_{(X,a)})_{(X,a)\in\VCat}:\Id_{\VCat}\to T$ is a 2-natural transformation.
\item $m=(m_{(X,a)})_{(X,a)\in\VCat}:T^2\to T$ is a 2-natural transformation.
\end{enumerate}
\end{lemma}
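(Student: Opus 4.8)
The plan is to verify the two components of 2-naturality separately, reducing each to identities we have already established. For part (1), I would first recall that by the previous lemma each $e_{(X,a)} = (e_X, \alpha_a)$ is a genuine $\V$-functor, so it only remains to show that $e$ is 2-natural, i.e.\ that for every $\V$-functor $(f, \fii_f) : (X,a) \to (Y,b)$ the square $T(f,\fii_f) \cdot e_{(X,a)} = e_{(Y,b)} \cdot (f,\fii_f)$ holds \emph{strictly}, and moreover that it respects 2-cells $\zeta : (f,\fii_f) \to (g,\fii_g)$. On underlying $\Set$-maps this is just the 2-naturality of $e : 1 \to T$ on $\Set$, namely $Tf \cdot e_X = e_Y \cdot f$, which holds on the nose. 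The content is therefore entirely at the level of the accompanying 2-cells: I must check that the 2-cell attached to $T(f,\fii_f) \cdot e_{(X,a)}$ coincides with the one attached to $e_{(Y,b)} \cdot (f,\fii_f)$. Unwinding the definition of composition of $\V$-functors (given in Section~\ref{sect:three}), the first is built from $\alpha_a$, $Tf$, $\fii_{Tf} = \kappa^{-1}_{b,f}\, T\fii_f\, \kappa_{f,a}$ and the appropriate $\kappa$'s, while the second is built from $\fii_f$, $e_Y$ and $\alpha_b$. The equality between them is exactly the first coherence diagram in \eqref{eq:coh} (the one relating $\alpha_{sr}$ to $\alpha_s$, $\alpha_r$ and $\kappa_{s,r}$), specialised to the case $r = a$, $s = f$ (a set map), together with the stipulation $\alpha_f = 1_{e_Y f}$ and the invertibility of $\kappa_{s,f}$ noted after \eqref{eq:lax}.

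For part (2), I would proceed in the same fashion. Each $m_{(X,a)} = (m_X, \beta_a)$ is a $\V$-functor by the previous lemma, and on underlying maps the naturality square is the strict identity $Tf \cdot m_X = m_Y \cdot T^2 f$ coming from $m : T^2 \to T$ on $\Set$. So again the only thing to verify is the equality of the attached 2-cells: on one side we compose $\beta_a$ with (the 2-cell of) $T^2(f,\fii_f) = (T^2 f, \fii_{T^2 f})$, on the other we compose $(T f, \fii_{T f})$ with $\beta_b$. Writing both out, this reduces to the second coherence diagram in \eqref{eq:coh} (the one relating $\beta_{sr}$ to $\beta_s$, $\beta_r$, $\kappa_{Ts,Tr}$ and $T\kappa_{s,r}$), specialised to $r = a$, $s = f$, once more using $\beta_f = 1_{m_Y T^2 f}$ and invertibility of the relevant $\kappa$'s. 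For the 2-cell naturality in both parts one argues identically, replacing the $\fii_f$-leg by a $\zeta$-leg and invoking the same coherence square together with the naturality conditions \eqref{eq:nat}.

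The bookkeeping is entirely routine once the translation is set up, so the only mild obstacle is organisational: one must carefully match the pasting diagram that defines $\fii_{gf} = (\fii_g Tf)(g\fii_f)$ for a composite of $\V$-functors against the shape of the coherence hexagons in \eqref{eq:coh}, keeping track of where the suppressed associativity and unit isomorphisms of $\VRel$ sit and using that $\kappa_{s,f}$ and $\kappa_{f^\circ,t}$ are isomorphisms (with the explicit inverses recorded after \eqref{eq:lax}) to invert the legs that point the ``wrong'' way. Since the statement merely asserts 2-naturality — a strict equation on the level of $\Set$-maps plus an equality of 2-cells dictated by \eqref{eq:coh} and \eqref{eq:nat} — no new ideas are needed beyond these coherence conditions, and I would simply remark that the verification is a direct diagram chase from \eqref{eq:coh} and \eqref{eq:nat}, exactly parallel to the proof of the preceding lemma.
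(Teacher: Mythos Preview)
Your approach is essentially the paper's: both reduce the naturality squares to the coherence conditions \eqref{eq:coh} and \eqref{eq:nat}, and the paper's proof is in fact just a one-line pointer to these two axioms.

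One correction to your bookkeeping, however: already for $1$-naturality (on a $\V$-functor $(f,\fii_f)$), not only for $2$-naturality, the condition \eqref{eq:nat} is indispensable. The equality you must establish has $T\fii_f$ (buried in $\fii_{Tf}=\kappa^{-1}_{b,f}\,T\fii_f\,\kappa_{f,a}$) on one side and $\fii_f$ on the other, and a single instance of \eqref{eq:coh} with $(s,r)=(f,a)$ cannot bridge these. What actually works is: apply \eqref{eq:nat} to the $2$-cell $\fii_f:fa\to bf$ to get $T\fii_f\,e_X\cdot\alpha_{fa}=\alpha_{bf}\cdot e_Y\fii_f$, then use \eqref{eq:coh} \emph{twice} --- once with $(s,r)=(f,a)$ and once with $(s,r)=(b,f)$ --- together with $\alpha_f=1$, to rewrite $\alpha_{fa}$ and $\alpha_{bf}$ in terms of $\alpha_a$, $\alpha_b$ and the $\kappa$'s; the desired equation drops out after cancelling $\kappa_{b,f}$. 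The argument for $m$ is analogous with $\beta$ in place of $\alpha$. This is exactly why the paper cites both \eqref{eq:nat} and \eqref{eq:coh} for the $1$-cell square.
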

\begin{proof}
To check that, in the diagrams
\[\xymatrix{&X\ar[rr]^{e_X}\ar[ld]|-{\object@{|}}_a\ar[dd]^>>>>>>{f}&&TX\ar[ld]|-{\object@{|}}^{Ta}\ar[dd]^{Tf}&&T^2X\ar[rr]^{m_X}\ar[ld]|-{\object@{|}}_{T^2a}
\ar[dd]^>>>>>>{T^2f}&&TX\ar[ld]|-{\object@{|}}^{Ta}\ar[dd]^{Tf}\\
X\ar@{}[dr]|{\mbox{\large $\Downarrow$}\varphi_{f}}\ar@{}[urrr]|{\mbox{\large $\stackrel{\alpha_a}{\Rightarrow}$}}\ar[rr]^(0.65){e_X}\ar[dd]_f&&TX\ar@{}[dr]|{\mbox{\large $\Downarrow$}\varphi_{Tf}}\ar[dd]^(0.65){Tf}&&
T^2X\ar@{}[dr]|{\mbox{\large $\Downarrow$}\varphi_{T^2f}}\ar@{}[urrr]|{\mbox{\large $\stackrel{\beta_a}{\Rightarrow}$}}\ar[dd]_{T^2f}\ar[rr]^(0.65){m_X}&&TX\ar@{}[dr]|{\mbox{\large $\Downarrow$}\varphi_{Tf}}\ar[dd]^(0.65){Tf}\\
&Y\ar[rr]^(0.35){e_Y}\ar[ld]|-{\object@{|}}_b&&TY\ar[ld]|-{\object@{|}}^{Tb}&
&T^2Y\ar[rr]^(0.35){m_Y}\ar[ld]|-{\object@{|}}_{T^2b}&&TY\ar[ld]|-{\object@{|}}^{Tb}\\
Y\ar@{}[urrr]|{\mbox{\large $\stackrel{\alpha_b}{\Rightarrow}$}}\ar[rr]^{e_Y}&&TY&&T^2Y\ar@{}[urrr]|{\mbox{\large $\stackrel{\beta_b}{\Rightarrow}$}}\ar[rr]^{m_Y}&&TY}\]
the composition of the 2-cells commute, one uses again diagrams (\ref{eq:nat}) and (\ref{eq:coh}). To prove 2-naturality just take in these diagrams a 2-cell $\zeta$ giving a transformation of $(\mT, \V)$-functors instead of $\fii_f$.
\end{proof}

\begin{theorem}
$(T,e,m)$ is a 2-monad on $\VCat$.
\end{theorem}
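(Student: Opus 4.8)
The plan is to assemble what the three preceding Lemmas already provide: that $T\colon\VCat\to\VCat$ is an endo-2-functor and that $e\colon\Id_{\VCat}\to T$ and $m\colon T^2\to T$ are 2-natural transformations. All that remains is to verify the monad equations
\[
m\cdot Tm \;=\; m\cdot mT,\qquad m\cdot Te\;=\;\Id_T\;=\;m\cdot eT.
\]
Since $T$ is a strict 2-functor and $e$, $m$ are strict 2-natural transformations, being a 2-monad in the present sense requires nothing more than these three equalities of 2-natural transformations; and each of them holds as soon as it holds componentwise, i.e.\ as an equality of $\V$-functors at every $\V$-category $(X,a)$. A $\V$-functor is determined by its underlying $\Set$-map together with its structure 2-cell; the underlying $\Set$-maps of the two sides of each equation coincide because $(T,e,m)$ is already a monad on $\Set$. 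So the theorem reduces to three equalities of 2-cells in $\VRel$, one for each equation and each $(X,a)$.

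To produce these 2-cell identities I would first spell out the composite structure 2-cells by means of the composition rule for $\V$-functors --- the general formula $\varphi_{gf}=(\varphi_gTf)(g\varphi_f)$ specialised to the identity monad, so that the inner ``$T$'' there becomes the identity --- together with the descriptions of $T$ applied to the $\V$-functors $e_{(X,a)}=(e_X,\alpha_a)$ and $m_{(X,a)}=(m_X,\beta_a)$ obtained from the preceding Lemmas, namely
\[
\varphi_{Te_X}=\kappa^{-1}_{Ta,e_X}\cdot T\alpha_a\cdot\kappa_{e_X,a},\qquad
\varphi_{Tm_X}=\kappa^{-1}_{Ta,m_X}\cdot T\beta_a\cdot\kappa_{m_X,T^2a}.
\]
Carrying out the substitutions, the structure 2-cell of $m\cdot eT$ becomes the $\alpha_a$--$\beta_a$ pasting that forms the left-hand unit diagram of \eqref{eq:mon} (at $r=a$), that of $m\cdot Te$ becomes the $\kappa$--$T\alpha_a$--$\kappa^{-1}$--$\beta_a$ pasting forming the right-hand unit diagram there, and that of $m\cdot Tm$ becomes the $\kappa$--$T\beta_a$--$\kappa^{-1}$--$\beta_a$ pasting forming the upper path of the associativity diagram of \eqref{eq:mon}; in the two unit cases one also invokes the $\Set$-monad identities $m_Xe_{TX}=1_{TX}=m_XTe_X$ to identify the relevant corners with $Ta$, and in the associativity case the identity $m_XTm_X=m_Xm_{TX}$. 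Thus commutativity of the three diagrams that constitute \eqref{eq:mon} is exactly what delivers the three required 2-cell equalities, and no higher coherence needs to be considered.

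The only genuine work is the bookkeeping: one must keep track of the insertions of $\kappa$ and $\kappa^{-1}$ that enter when the 2-functor $T$ is applied to $e_{(X,a)}$ and $m_{(X,a)}$, and check --- modulo the suppressed unit and associativity isomorphisms of $\VRel$ --- that the pasted 2-cells built from $\alpha_a$, $\beta_a$ and these $\kappa$'s are indeed the boundaries of the \eqref{eq:mon} diagrams. This is the same mate-and-pasting manipulation already carried out in the proofs of the preceding Lemmas, so I expect it to be routine, but it is the main (and only) obstacle; once it is dealt with, commutativity of \eqref{eq:mon} finishes the proof.
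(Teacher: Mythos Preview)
Your proposal is correct and follows essentially the same approach as the paper: reduce the monad equations to componentwise equalities of $\V$-functors, note that the underlying $\Set$-maps agree by the $\Set$-monad laws, and then verify the remaining 2-cell identities. The only difference is in the attribution of those 2-cell identities: you trace them (correctly) to the three diagrams of \eqref{eq:mon}, whereas the paper's one-line proof cites \eqref{eq:nat} and \eqref{eq:coh}. Your attribution is the right one --- the three \eqref{eq:mon} diagrams, instantiated at $r=a$, are \emph{literally} the required equalities for $m\cdot eT=\Id_T$, $m\cdot Te=\Id_T$, and $m\cdot Tm=m\cdot mT$ once the composite structure 2-cells are unfolded as you describe; \eqref{eq:nat} and \eqref{eq:coh} by themselves do not yield these identities. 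So your more detailed account is both correct and slightly more accurate than the paper's terse reference.
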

\begin{proof}
It remains to check the commutativity of the diagrams, for each category $(X,a)$,
\[\xymatrix@!C=7ex{(TX,Ta)\ar[rr]^-{(e_{TX},\alpha_{Ta})}\ar[ddrr]_{(1,1)}&&(T^2X,T^2a)\ar[dd]^{(m_X,\beta_a)}&&(TX,Ta)\ar[ll]_-{(Te_X,\kappa^{-1} T\alpha_a\kappa)}\ar[ddll]^{(1,1)}&(T^3X,T^3a)\ar[rr]^-{(m_{TX},\beta_{Ta})}\ar[dd]_{(Tm_X,\kappa^{-1} T\beta_a\kappa)}&&(T^2X,T^2a)\ar[dd]^{(m_X,\beta_a)}\\
\\
&&(TX,Ta)&&&(T^2X,T^2a)\ar[rr]^-{(m_X,\beta_a)}&&(TX,Ta),}\]
which follows again from diagrams (\ref{eq:nat}) and (\ref{eq:coh}).
\end{proof}

Denoting the 2-category of algebras of this 2-monad by $(\VCat)^\mT$, we get a commutative diagram
\begin{equation}\tag{$\mT$-alg}\label{eq:Talg}
\xymatrix@!R=5ex{\Set^\mT\ar@{}[d]|{\dashv}\ar@<1ex>[d]^{U^\mT}\ar@<-1ex>[rr]\ar@{}[rr]|{\top}&&(\VCat)^\mT\ar@{}[d]|{\dashv}\ar@<1ex>[d]^{U^\mT}\ar@<-1ex>[ll]\\
\Set\ar@<1ex>[u]^{F^\mT}\ar@<-1ex>[rr]\ar@{}[rr]|{\top}&&\VCat.\ar@<-1ex>[ll]\ar@<1ex>[u]^{F^\mT}}
\end{equation}

\section{The fundamental adjunction}\label{sect:FundAdj}
\emph{From now on we assume that $\hat{\beta}_r:Trm_X^\circ\to m_Y^\circ Tr$ is an isomorphism for each $\V$-relation $r:X\relto Y$, so that $m^\circ:T\to T^2$ becomes a pseudo-natural transformation on $\VRel$.}
\bigskip

In this section we will build an adjunction
\begin{equation}\tag{ADJ}\label{eq:ADJ}
(\VCat)^\mT\adjunct{M}{K}\TVCat.
\end{equation}

Let $((Z,c,\eta_c,\mu_c),(h,\fii_h))$ be an object of $(\VCat)^\mT$. The $\V$-category unit $\eta_c$ is a 2-cell
 $1_Z \to c = che_Z$. Let $\widetilde{\mu}_c$ be the 2-cell defined by:
\begin{equation}\tag{$\widetilde{\mu}_c$}\label{eq:muc}
\xymatrix{chT(ch)\ar[r]^-{-\kappa^{-1}_{c,h}}& chTcTh \ar[r]^-{-\fii_h -}& cch Th = cch m_Z\ar[r]^-{\mu_c -}& ch m_Z.}
\end{equation}

\begin{lemma}
The data $(Z, ch,\eta_c,\widetilde{\mu}_c)$ gives a $(\mT, \V)$-category.
\end{lemma}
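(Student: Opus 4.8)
The goal is to verify that $(Z, ch, \eta_c, \widetilde{\mu}_c)$ satisfies the two axioms in (\ref{eq:cat}), namely the unit law and the associativity law for the generalized monad structure on the $\V$-relation $ch:TZ\relto Z$. The basic idea is to reduce each of these to the corresponding axiom for the $\V$-category $(Z,c,\eta_c,\mu_c)$ together with the $\mT$-algebra axioms for $(h,\fii_h)$ (i.e. $h\cdot e_Z = 1_Z$ and $h\cdot Th = h\cdot m_Z$, suitably enriched to $\VCat$) and the fact that $(h,\fii_h)$ is a $\V$-functor, i.e. the diagrams (\ref{eq:fun}). The oplax/lax 2-cells $\alpha,\beta,\kappa$ and their coherences (\ref{eq:lax}), (\ref{eq:mon}), (\ref{eq:coh}), (\ref{eq:nat}) will be used to move $T$ past composites and to commute $e,m$ past $c$ and $h$. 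I would draw all computations as pasting diagrams in $\VRel$, as is done throughout the paper.

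\textbf{Unit law.} I would start from the composite $ch\cdot e_{TZ}\cdot(ch)$ appearing in the left square of (\ref{eq:cat}) (with $a$ replaced by $ch$) and use $\alpha_{ch}$ (or rather naturality of $\alpha$ together with $\alpha_c$, $\alpha_h$ via the coherence (\ref{eq:coh})) to slide $e$ across, then invoke the definition (\ref{eq:muc}) of $\widetilde{\mu}_c$. After expanding $\widetilde{\mu}_c$ in terms of $\kappa^{-1}_{c,h}$, $\fii_h$, and $\mu_c$, the key simplification is that $h\cdot e_Z = 1_Z$ on the nose (the $\mT$-algebra unit law for the underlying map), and that the enriched unit 2-cell of the algebra is compatible with $\eta_c$; combined with the unit axiom of (\ref{eq:cat}) for $(Z,c)$ and the first diagram in (\ref{eq:fun}) for $(h,\fii_h)$, this collapses to $1_{ch}$. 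The bookkeeping of which $\kappa$'s are invertible (those involving the set-map $h$, as noted right after (\ref{eq:lax})) is what makes this go through. Analogously one handles the second unit triangle involving $Te_Z$ and $\kappa^{-1}_{ch, e_Z}$.

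\textbf{Associativity law.} This is the main obstacle. One must show that the hexagon in (\ref{eq:cat}) with $a = ch$ commutes, which after expanding $\widetilde{\mu}_c$ twice becomes a large diagram involving two copies of $\kappa^{-1}_{c,h}$, two copies of $\fii_h$, two copies of $\mu_c$, the oplax cell $\beta_{ch}$, and a $T$ applied to $\widetilde{\mu}_c$ itself. The plan is: (i) use (\ref{eq:lax}) and the invertibility of $\kappa_{-,h}$, $\kappa_{h^\circ,-}$ to push $T$ through the composite $ch$ and reorganize $T(\widetilde{\mu}_c)$ into $T\mu_c$, $T\fii_h$, $T\kappa^{-1}_{c,h}$; (ii) use $\beta_c$, $\beta_h$ and the coherence (\ref{eq:coh}) (the second square, for $\beta$) to replace $\beta_{ch}$ by the appropriate composite of $\beta_c$ and $\beta_h$ past $m$; (iii) invoke the $\VCat$-monad associativity, i.e. the $\mT$-algebra multiplication law $h\cdot m_Z = h\cdot Th$ in its enriched form — concretely the second diagram in (\ref{eq:fun}) applied to $(h,\fii_h)$ relating $\fii_h\cdot Th$, $\kappa_{h,c}$, and $m$; (iv) finally apply the associativity axiom of (\ref{eq:cat}) for $(Z,c,\eta_c,\mu_c)$ to merge the two $\mu_c$'s. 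The difficulty is purely organizational: keeping track of the suppressed associativity isomorphisms and ensuring each $\kappa$ used in the rewriting is one of the invertible ones (because $h$ is a set-map), so that the diagram can actually be traversed in both directions. I would present this as a sequence of pasting-diagram rewrites exactly in the style of the proof of Proposition~\ref{th:adj}, indicating at each step which axiom ((\ref{eq:lax}), (\ref{eq:mon}), (\ref{eq:coh}), (\ref{eq:fun}), or (\ref{eq:cat})) is invoked, and leave the fully routine intermediate identities to the reader.
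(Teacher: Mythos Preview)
Your proposal is correct and follows essentially the same strategy as the paper: reduce each of the three $(\mT,\V)$-category axioms for $(Z,ch,\eta_c,\widetilde{\mu}_c)$ to the corresponding $\V$-category axiom for $(Z,c,\eta_c,\mu_c)$, using the $\mT$-algebra structure $(h,\fii_h)$ and the coherence conditions for the lax extension. The paper's own proof is a single sentence to this effect (citing (\ref{eq:mon}) and the algebra structure), so your sketch is simply a more detailed unpacking of the same argument.
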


\begin{proof}
Each of the three  $(\mT, \V)$-category axioms follows from the corresponding  $\V$-category axiom for $(Z,c,\eta_c,\mu_c)$, using (\ref{eq:mon}) and the fact that $(h,\fii_h)$ is an algebra structure.
\end{proof}
\noindent We set
\[K((Z,c,\eta_c,\mu_c),(h,\fii_h))=(Z,ch,\eta_c,\widetilde{\mu}_c).\]
 $K$ extends to a 2-functor in the following way. For a morphism of $\mT$-algebras $(f,\fii_f):((Z,c), h)\to((W,d), k)$, we set $K(f,\fii_f) = (f,\fii_fh)$, where $\fii_f h$ is regarded as a morphism $f ch \longrightarrow dfh=dk Tf$.
For a natural transformation of $\mT$-algebras $\zeta : (f, \fii_f) \to (g, \fii_g)$ we define $K(\zeta)=\zeta h$. By straightforward calculations these indeed define a 2-functor.

Let now $(X,a,\eta_a,\mu_a)$ be a $(\mT,\V)$-category. Let $\ea=Ta m_X^\circ$. Define a 2-cell $\eta_{\ea} : 1_{TX} \to \ea$ by the composite
\begin{equation}\tag{$\eta_{\ea}$}\label{eq:etaa}
\xymatrix{1_{TX} = T1_X \ar[r]^-{T\eta_a}& T(ae_X)\ar[r]^{\kappa^{-1}_{a,e_X}}&Ta Te_X \ar[r]^-{-\lambda_{m_X}-}& Ta m_X^\circ m_XTe_X=Tam_X^\circ,}
\end{equation}
and define $\mu_{\ea} : \ea\ea \to \ea$ by
\[
\xymatrix{
TX \ar[rr]|-{\object@{|}}^{m^\circ_X} \ar[d]|-{\object@{|}}_{m^\circ_X} && T^2X \ar[rr]|-{\object@{|}}^{Ta} \ar[d]|-{\object@{|}}_{m^\circ_{TX}} \ar@{}[rrd]|{\mbox{\large$\stackrel{\hat{\beta}^{-1}}{\Leftarrow}$ }}&& TX  \ar[d]|-{\object@{|}}_{m^\circ_X} \\
T^2X \ar[rr]|-{\object@{|}}^{Tm^\circ_X} \ar@{}[rrd]|{\mbox{\large $\stackrel{\rho_{Tm^\circ_X}}{\Leftarrow}$}} \ar@/_1.5pc/[rrd]_{1_{T^2X}} && T^3X \ar[rr]|-{\object@{|}}^{T^2a} \ar[d]_{Tm_X} \ar@{}[rrd]|{\mbox{\large $\stackrel{-T\mu_a-}{\Leftarrow}$}}&& T^2X \ar[d]|-{\object@{|}}_{Ta} \\
 &&T^2X \ar[rr]|-{\object@{|}}^{Ta} && TX.\\
}
\]

\begin{lemma}
The data $(TX,\ea, \eta_{\ea},\mu_{\ea})$ determines a $\V$-category.
\end{lemma}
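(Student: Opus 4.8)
The plan is to verify the three $\V$-category axioms for $(TX,\ea,\eta_{\ea},\mu_{\ea})$ by reducing each one to the corresponding axiom of the $(\mT,\V)$-category $(X,a,\eta_a,\mu_a)$ together with the coherence relations $(\ref{eq:mon})$, $(\ref{eq:coh})$, $(\ref{eq:nat})$ among $\kappa$, $\alpha$, $\beta$ and, crucially, the now-standing assumption that $\hat\beta$ is invertible (so $m^\circ$ is pseudo-natural, which is precisely what makes the pasting diagram defining $\mu_{\ea}$ well-typed). First I would unwind the definition $\ea = Tam_X^\circ$ and the composites defining $\eta_{\ea}$ and $\mu_{\ea}$, fixing once and for all the bookkeeping of the various adjunction units/counits $\lambda_{m_X}$, $\rho_{Tm_X^\circ}$ and of $\hat\beta_a^{-1}$ that appear.

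For the left unit law $\eta_{\ea}\cdot$ composed into $\mu_{\ea}$, I would substitute the formula for $\eta_{\ea}$ into the left input of the $\mu_{\ea}$-pasting and then use the snake/triangle identity $(-\rho_{Tm_X^\circ})(\lambda_{Tm_X^\circ}-)=1$ to cancel an $m_X^\circ m_X$ pair; what remains should be recognisable, after one application of the $\kappa$-unit relation $\kappa_{a,e_X}$ and the unit diagram in $(\ref{eq:cat})$ (the top-left square, involving $\eta_a$, $\alpha$ and $\mu_a$), as the identity on $\ea$. The right unit law is dual: here one feeds $\eta_{\ea}$ into the \emph{right} input of $\mu_{\ea}$, the relevant interaction is with $Te_X$ and the second unit square of $(\ref{eq:cat})$ (the one with $T\eta_a$ and $\kappa^{-1}_{a,e_X}$), and one uses naturality of $\hat\beta$ (equivalently $\beta$) with respect to the 2-cell $T\eta_a$ to move things past $m_X^\circ$. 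Both unit laws should be essentially formal once the types are pinned down.

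The associativity law $\mu_{\ea}\cdot(\mu_{\ea}\,\ea) = \mu_{\ea}\cdot(\ea\,T'\!\mu_{\ea})$ — more precisely with the appropriate $\kappa$'s inserted, mirroring the large pentagon in $(\ref{eq:cat})$ — is where the real work lies, and I expect it to be the main obstacle. The strategy is: expand both composites of $\mu_{\ea}$'s into a single large pasting diagram in $\VRel$ built from copies of $Ta$, $m_X^\circ$, $m_{TX}^\circ$, $Tm_X^\circ$, the three instances of $\hat\beta^{-1}$, the $T\mu_a$'s, and the snake cells; then rewrite repeatedly using (i) the pseudo-naturality squares of $m^\circ$ (i.e. $\hat\beta$ and $\hat\beta^{-1}$) to slide the $Ta$-cells past the multiplication components, (ii) the two hexagons in $(\ref{eq:mon})$ relating $\beta$ to $m$ and $\kappa$, and (iii) the associativity pentagon of $\mu_a$ from $(\ref{eq:cat})$ applied once in the ``interior''. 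The bottleneck will be organising this: there are several ways to triangulate the big diagram, and one must choose the normal form so that the $\hat\beta^{-1}$'s cancel in pairs and exactly one copy of the $\mu_a$-pentagon is exposed. A clean way to tame it is to work with the mate description — translate everything back through the isomorphism $\hat\beta_r : Trm_X^\circ \xrightarrow{\sim} m_Y^\circ Tr$ into statements purely about the op-lax $\beta$ on $\VRel$, where the hexagons of $(\ref{eq:mon})$ are directly applicable — and only at the end transport the resulting identity back. With that reformulation the associativity of $\mu_{\ea}$ becomes the associativity axiom for $\mu_a$ decorated with coherence isomorphisms, and the verification, though lengthy, is routine.
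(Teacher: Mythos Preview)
Your proposal is correct and follows the same approach as the paper: the paper's own proof is the single sentence ``The three $\V$-category axioms follow from the corresponding $(\mT,\V)$-category axioms for $(X,a,\eta_a,\mu_a)$'', and your outline is a faithful (and considerably more detailed) unpacking of that reduction, using exactly the auxiliary coherence data one would expect---the mate/$\hat\beta$ description, the triangle identities for $m_X\dashv m_X^\circ$, and the relevant instances of (\ref{eq:mon}) and (\ref{eq:coh}). One small cosmetic point: your labelling of which unit square in (\ref{eq:cat}) feeds which unit law for $\ea$ may need swapping once you fix a left/right convention (since $\eta_{\ea}$ is built from $T\eta_a$ rather than $\eta_a$), but this does not affect the substance of the argument.
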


\begin{proof}
 The three $\V$-category axioms follow from the corresponding $(\mT, \V)$-category axioms for $(X,a,\eta_a,\mu_a)$.
\end{proof}
Let $\fii_{\ea} : m_XT\ea\to \ea m_X$  be the composite 2-cell
\[\xymatrix@R=1.5em{
T^2X \ar[d]_{m_X} \ar[rr]|-{\object@{|}}^{Tm^\circ_X} \ar@{}[rrd]|{\mbox{\large $\stackrel{}{\Leftarrow}$}} \ar@/^3.pc/[rrrr]|-{\object@{|}}^{T(Tam^\circ_X)}_{\mbox{\large $\stackrel{\kappa^{-1}_{Ta,m^\circ_X}}{\Leftarrow}$}} &&T^3X  \ar[rr]|-{\object@{|}}^{T^2a} \ar[d]_{m_{TX}} \ar@{}[rrd]|{\mbox{\large $\stackrel{\beta_a}{\Leftarrow}$}} &&T^2X \ar[d]^{m_X}\\
TX \ar[rr]|-{\object@{|}}_{m^\circ_X}&&T^2X \ar[rr]|-{\object@{|}}_{Ta} && TX.
}
\]
Wherein the left 2-cell is the mate of the identity map $1_{m_X m_{TX}=m_XTm_X}$. Direct calculations yield:
\begin{lemma}
The pair $(m_X, \fii_{\ea})$ is a $\V$-functor $T(TX, \ea) \to (TX, \ea)$; moreover, it defines a $\mT$-algebra structure on the $\V$-category $(TX, \ea)$.
\end{lemma}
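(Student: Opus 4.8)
The plan is to verify, in order, the two claims of the lemma: first that $(m_X,\fii_{\ea})$ is a $\V$-functor $T(TX,\ea)\to(TX,\ea)$, and then that this $\V$-functor, together with the previously constructed $\V$-category structure on $(TX,\ea)$, satisfies the two axioms of an Eilenberg--Moore algebra for the $2$-monad $(T,e,m)$ on $\VCat$ constructed in Section~\ref{sect:VCatMonad}. Throughout I would work with pasting diagrams in $\VRel$, treating $\fii_{\ea}$ as the pasted $2$-cell displayed above, and freely suppressing unity and associativity isomorphisms as the paper does elsewhere. The key structural input is the standing assumption that $\hat{\beta}$ is invertible, which makes $m^\circ:T\to T^2$ pseudo-natural; this is what allows the various occurrences of $m_X^\circ$ and $Tm_X^\circ$ to be slid past one another coherently.

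First I would check that $(m_X,\fii_{\ea})$ respects the units $\eta_{\ea}$ and the multiplications $\mu_{\ea}$, i.e.\ the two diagrams~(\ref{eq:fun}) instantiated at $(f,\fii_f)=(m_X,\fii_{\ea})$, with domain $\V$-category $T(TX,\ea)=(T^2X,T\ea,T\eta_{\ea},T\mu_{\ea}\kappa_{\ea,\ea})$ and codomain $(TX,\ea)$. For the unit triangle I would expand $\eta_{\ea}$ via~(\ref{eq:etaa}), push the outer $T$ inside using the $\kappa$'s, and reduce everything to the identity $1_{m_Xm_{TX}}=1_{m_XTm_X}$ together with one of the monad-coherence squares in~(\ref{eq:mon}); the mate description of the left $2$-cell of $\fii_{\ea}$ is exactly what converts the $\lambda_{m_X}$ appearing in $\eta_{\ea}$ into the required $\rho$. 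For the multiplication hexagon I would combine the definition of $\mu_{\ea}$ (the pasting with $\hat{\beta}^{-1}$ and $T\mu_a$), the definition of $\fii_{\ea}$, the naturality and coherence conditions (\ref{eq:nat}), (\ref{eq:coh}) for $\beta$, the associativity square of~(\ref{eq:mon}) for $\beta$, and the associativity axiom of the $(\mT,\V)$-category $(X,a)$ from~(\ref{eq:cat}); the bookkeeping is substantial but each rewrite is one of these named identities.

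Next I would verify the algebra axioms. The unit axiom requires that the composite $\V$-functor $(TX,\ea)\xrightarrow{e_{(TX,\ea)}}T(TX,\ea)\xrightarrow{(m_X,\fii_{\ea})}(TX,\ea)$ is the identity; on underlying maps this is $m_X\cdot e_{TX}=1_{TX}$, and on $2$-cells it amounts to pasting $\alpha_{\ea}$ (the functor $2$-cell of $e_{(TX,\ea)}$, recall $e_{(Y,b)}=(e_Y,\alpha_b)$) against $\fii_{\ea}$ and recognising the result as $1_{\ea}$; here the op-lax unit law in~(\ref{eq:mon}) relating $\alpha$, $\beta$ and the $\kappa$'s does the work, after unfolding $\ea=Tam_X^\circ$ and using $\alpha_{m_X^\circ}$, which is trivial since $m_X^\circ$ is (the transpose of) a map. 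The associativity axiom compares the two composites $T^2(TX,\ea)\rightrightarrows T(TX,\ea)\to(TX,\ea)$ built from $m_{(TX,\ea)}=(m_{TX},\beta_{\ea})$ and from $T(m_X,\fii_{\ea})$; on underlying maps this is the monad law $m_X\cdot m_{TX}=m_X\cdot Tm_X$, and on $2$-cells it reduces, via the definition of $\fii_{Tf}=\kappa^{-1}_{\ea,m_X}T\fii_{\ea}\kappa_{m_X,\ea}$ from the first lemma of Section~\ref{sect:VCatMonad} together with the $\beta$-coherence~(\ref{eq:coh}) and the second square of~(\ref{eq:mon}), to an identity of pasting diagrams.

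The main obstacle will be the multiplication hexagon for $(m_X,\fii_{\ea})$ being a $\V$-functor: it is the one place where all of $\hat{\beta}^{-1}$, the $\kappa$/$\kappa^{-1}$ coherences, the $\beta$-naturality and $\beta$-coherence conditions, and the associativity law of $(X,a)$ must be orchestrated simultaneously, and where the suppressed associativity isomorphisms in $\VRel$ genuinely have to be tracked to see that the two pastings agree. Everything else is, as the paper says, ``direct calculations'': repeated substitution of the named diagrams (\ref{eq:lax}), (\ref{eq:oplax}), (\ref{eq:mon}), (\ref{eq:coh}), (\ref{eq:nat}), (\ref{eq:cat}) and the invertibility of $\hat{\beta}$.
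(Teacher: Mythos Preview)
Your proposal is correct and is precisely an elaboration of the paper's own ``proof'', which consists solely of the phrase ``Direct calculations yield'' before the lemma statement. The specific identities you plan to invoke---(\ref{eq:lax}), (\ref{eq:oplax}), (\ref{eq:mon}), (\ref{eq:coh}), (\ref{eq:nat}), (\ref{eq:cat}) together with the invertibility of $\hat\beta$---are exactly the ones needed, and your identification of the multiplication hexagon as the most intricate step is accurate.
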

We set
\[M(X,a)=((TX,\ea), (m_X, \fii_{\ea})).\]
We extend this construction to a 2-functor as follows. For a $(\mT,\V)$-functor $(f,\fii_f):(X,a)\to(Y,b)$, $M(f,\fii_f)=(Tf,\widetilde{\fii}_{Tf})$, where $\widetilde{\fii}_{Tf}$ is given by
\[\xymatrix@R=1.5em{
TX \ar[d]_{Tf} \ar[rr]|-{\object@{|}}^{m^\circ_X} \ar@{}[rrd]|{\mbox{\large $\stackrel{\hat{\beta}_f}{\Leftarrow}$}} &&T^2X  \ar[rr]|-{\object@{|}}^{Ta} \ar[d]_{T^2f} \ar@{}[rrd]|{\mbox{\large $\stackrel{-T\fii_f-}{\Leftarrow}$}} &&TX \ar[d]^{Tf}\\
TY \ar[rr]|-{\object@{|}}_{m^\circ_Y}&&T^2Y \ar[rr]|-{\object@{|}}_{Tb} && TY.
}\]
For a natural transformation of $(\mT,\V)$-functors $\zeta : (f, \fii_f) \to (g, \fii_g)$, $M(\zeta)$ is defined by a similar diagram. By direct verification $M$ is a 2-functor.

\begin{theorem}
$M$ is a left 2-adjoint to $K$.
\end{theorem}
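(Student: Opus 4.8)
The plan is to exhibit the adjunction $M\dashv K$ by constructing the unit and counit directly, in the style of the two previous adjunction proofs (for $A^\circ\dashv A_e$ and its variant), and then to verify the two triangle identities. Since both $M$ and $K$ are already known to be 2-functors, what remains is a purely 2-categorical bookkeeping argument, so I would not attempt to reduce it to a universal-property computation from scratch.

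First I would describe the counit. For a $(\mT,\V)$-category $(X,a,\eta_a,\mu_a)$ we have $KM(X,a) = K((TX,\ea),(m_X,\fii_{\ea})) = (TX,\ea\,m_X) = (TX,\,Ta\,m_X^\circ\,m_X)$. Using the adjunction $m_X\dashv m_X^\circ$ in $\VRel$ (with counit $\rho_{m_X}\colon m_Xm_X^\circ\to 1_{T^2X}$ — here I must be careful about the direction, since $m_X^\circ$ is right adjoint to $m_X$) together with the $\mT$-algebra-like identity $m_X\cdot Te_X = 1_{TX}$, one gets a canonical $2$-cell $Ta\,m_X^\circ m_X \to Ta$, and I claim this is the action-part of a $(\mT,\V)$-functor $\varepsilon_{(X,a)}\colon KM(X,a)\to (X,a)$ whose underlying map is $m_X\colon TX\to X$... no: the underlying map of a morphism $KM(X,a)\to(X,a)$ must be a $\Set$-map $TX\to X$, which we do not have in general. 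So instead the counit component should go the other way, or — more likely, given the shape of $K$ and $M$ — the unit is the $(\mT,\V)$-functor $(X,a)\to KM(X,a)$ with underlying map $e_X\colon X\to TX$ and $2$-cell built from $\eta_a$, $\alpha_a$, and the $\kappa$'s; and the counit is the morphism of $\mT$-algebras $MK((Z,c),h)\to ((Z,c),h)$ whose underlying $\V$-functor has underlying map $h\colon TZ\to Z$, with action $2$-cell assembled from $\fii_h$, $\mu_c$, $\hat\beta$, and $\eta_c$. I would therefore set up the unit $\eta^{M\dashv K}_{(X,a)} = (e_X, \text{---})\colon (X,a)\to (TX, Ta\,m_X^\circ e_X)$, checking $Ta\,m_X^\circ e_X \cong Ta\,Te_X\cdot(\text{unit stuff})$ reduces correctly, and the counit $\varepsilon^{M\dashv K}_{((Z,c),h)} = (h,\text{---})$, verifying it is a morphism in $(\VCat)^\mT$ (both a $\V$-functor and compatible with the algebra structures, the latter using that $(h,\fii_h)$ is an algebra).

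Next I would check the two triangle identities $\varepsilon M \cdot M\eta = 1_M$ and $K\varepsilon\cdot \eta K = 1_K$. For the first, unwinding $M\eta_{(X,a)} = (Te_X,\widetilde\fii_{Te_X})$ and $\varepsilon_{M(X,a)} = (m_X,\text{---})$, the underlying map is $m_X\cdot Te_X = 1_{TX}$, and the composite $2$-cell collapses to the identity by the unity axiom in (\ref{eq:cat}) together with one application of (\ref{eq:mon}) — exactly the kind of collapse carried out at the end of the proof of Proposition~\ref{th:adj}. For the second, $\eta_{K((Z,c),h)}$ has underlying map $e_Z$ and $K\varepsilon_{((Z,c),h)}$ has underlying map $h$, so the underlying map of the composite is $h\cdot e_Z = 1_Z$ (the $\mT$-algebra unit law for $(h,\fii_h)$), and the $2$-cell collapses using $\eta_c$, the algebra unit law, and naturality conditions (\ref{eq:nat}).

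The main obstacle, as always in this framework, is the second and third of these: pasting-diagram manipulations verifying that the proposed unit and counit $2$-cells actually satisfy the defining commutative diagrams (\ref{eq:fun}) for $(\mT,\V)$-functors (resp.\ for $\mT$-algebra morphisms), and that the triangle composites collapse to identities. These are long but mechanical, relying repeatedly on (\ref{eq:mon}), (\ref{eq:coh}), (\ref{eq:nat}), the triangle identities for the adjunctions $f\dashv f^\circ$, the invertibility of $\hat\beta$ (our standing hypothesis), and the invertibility of $\kappa_{s,f}$ and $\kappa_{f^\circ,t}$ for $\Set$-maps $f$. Given that all the constituent $2$-cells have already been shown (in the preceding lemmas) to assemble into well-defined $\V$-functors, $(\mT,\V)$-functors and algebra morphisms, I expect the triangle identities to follow from the same axioms by diagram chases entirely parallel to those displayed for Proposition~\ref{th:adj}, and so I would present them in that telegraphic "by (mon), then by (coh), then by associativity of $\mu$" style rather than in full.
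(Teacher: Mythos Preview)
Your proposal is correct and follows essentially the same approach as the paper: the unit of $M\dashv K$ is the $(\mT,\V)$-functor with underlying map $e_X$ (the paper's 2-cell is $\widetilde{\alpha}_a$ built from $\alpha_a$ and $\lambda_{m_X}$, not $\eta_a$), the counit is the $\mT$-algebra morphism with underlying map $h$ (the paper's 2-cell $\widetilde{\fii}_h$ uses $\kappa^{-1}_{c,h}$, $\fii_h$ and $\rho_{m_X}$, not $\mu_c$, $\hat\beta$, $\eta_c$), and the triangle identities are then declared straightforward. Aside from the slip writing the target of the unit as $(TX,\,Ta\,m_X^\circ e_X)$ rather than $(TX,\,Ta\,m_X^\circ m_X)$---which you had correct a few lines earlier---and the slightly off guesses at the constituent 2-cells, your outline matches the paper's proof.
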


\begin{proof}
Given a $(\mT,\V)$-category $(X,a,\eta_a,\mu_a)$,
\[
\xymatrix{(e_X,\widetilde{\alpha}_a):(X,a,\eta_a,\mu_a)\ar[r]& KM(X,a,\eta_a,\mu_a)=(TX,Tam_X^\circ m_X,\eta_{\ea},\widetilde{\mu}_a),}\]
is a $(\mT, \V)$-functor, where $\widetilde{\alpha}_a$ is the composite
\begin{equation}\tag{unit}\label{eq:unit}
\xymatrix{(e_Xa\ar[r]^-{\alpha_a}&Tae_{TX}\ar[rr]^-{-\lambda_{m_X}-}&&Tam_X^\circ m_Xe_{TX}=Tam_X^\circ m_X Te_X),}\end{equation}
These functors define a natural transformation $1 \rightarrow KM$. Given a $\mT$-algebra $((Z,c,\eta_c,\mu_c),(h,\fii_h))$,
\[\xymatrix{(h, \widetilde{\fii}_h): MK((Z,c,\eta_c,\mu_c),(h,\fii_h))=(TZ,T(ch)m_X^\circ,\hat{\eta}_{ch},\mu_{\widehat{ch}}) \ar[r]&  ((Z,c,\eta_c,\mu_c),(h,\fii_h))},\]
is a morphism of $\mT$-algebras, where $\widetilde{\fii}_h$ is defined as
\[hT(ch)m_X^\circ\xrightarrow{-\kappa_{c,h}^{-1}}h TcTh m_X^\circ \xrightarrow{\fii_h -} ch Th m_X^\circ = ch m_Xm_X^\circ \xrightarrow{- \rho_{m_X}} ch,\]
 These define a natural transformation $MK \rightarrow 1$. These natural transformations serve as the unit and the counit of our adjunction. The triangle identities are straightforwardly verified.
\end{proof}

\section{$\mT$ as a $\TVCat$ monad}

Let us identify the 2-monad on $\TVCat$ induced by the adjunction $M \dashv K$, which we denote again by $\mT= (KM = T,e,m)$.

Thus, $T = KM$ is a 2-endofunctor on  $\TVCat$. To a $(\mT, \V)$-category $(X, a,\eta_a,\mu_a)$ it assigns the $(\mT,\V)$-category
$(TX, \ea m_X=Tam_X^\circ m_X,\eta_{\ea},\widetilde{\mu}_{\ea})$ with components defined in the diagrams (\ref{eq:etaa}) and (\ref{eq:muc}) of the last section, to a
$(\mT, \V)$-functor $(f, \fii_f)$ it assigns the $(\mT,\V)$-functor $(Tf,\widetilde{\fii}_f)$ which can be diagrammatically specified by
\[\xymatrix{T^2X\ar[d]_{m_X}\ar[rr]^{T^2f}&&T^2Y\ar[d]^{m_Y}\\
TX\ar[d]|-{\object@{|}}_{m_X^\circ}\ar[rr]^{Tf}&&TY\ar[d]|-{\object@{|}}^{m_Y^\circ}\\
T^2X\ar@{}[urr]|{\mbox{\large $\stackrel{\ebeta_f}{\Rightarrow}$}}\ar[d]|-{\object@{|}}_{Ta}\ar[rr]^{T^2f}&&T^2Y\ar[d]|-{\object@{|}}^{Tb}\\
TX\ar@{}[urr]|{\mbox{\large $\stackrel{-T\fii_f-}{\Rightarrow}$}}\ar[rr]^{Tf}&&TY,}\]
 and the $\mT$-image of a $(\mT, \V)$-natural transformation $\zeta : (f, \fii_f) \to (g, \fii_g)$ is computed by a similar diagram.

The unit of the 2-monad is the unit $(e,\widetilde{\alpha})$ of the adjunction $K \dashv M$ defined in (\ref{eq:unit}). The multiplication of the 2-monad is given by $(m,\widetilde{\beta})$, the component of which at a $(\mT, \V)$-category $(X, a)$,
 -- which is a $(\mT, \V)$-functor $MKMK(X, a) \to MK(X, a)$ --, is  pictorially described by:
\[\xymatrix{T^3X\ar `l^d[ddddd]`^r[ddddd]|-{\object@{|}}_{MKMK(a)} [ddddd]
\ar[d]_{m_{TX}}\ar[rr]^{Tm_X}&&T^2X\ar `r_d[ddddd]`_l[ddddd]|-{\object@{|}}^{MK(a)} [ddddd]\ar[d]^{m_X}\\
T^2X\ar[rr]^{m_X}\ar[d]|-{\object@{|}}_{m_{TX}^\circ}&&TX\ar[d]^1\\
T^3X\ar[rr]_{m_Xm_{TX}}\ar@{}[urr]|{\mbox{\large $\stackrel{-\rho_{m_{TX}}}{\Rightarrow}$}}\ar[d]_{Tm_X}&&TX\ar[d]^1\\
T^2X\ar[d]|-{\object@{|}}_{Tm_X^\circ}\ar[rr]^{m_X}&&TX\ar[d]|-{\object@{|}}^{m_X^\circ}\\
T^3X\ar[rr]^{m_{TX}}\ar@{}[rru]|{\mbox{\large $\stackrel{(-\rho_{Tm_X})(\lambda_{m_X}-)}{\Rightarrow}$}}\ar[d]|-{\object@{|}}_{T^2a}&&T^2X\ar[d]|-{\object@{|}}^{Ta}\\
T^2X\ar[rr]^-{m_X}\ar@{}[rru]|{\mbox{\large $\stackrel{\beta_{a}}{\Rightarrow}$}}&&TX.}\]

\begin{theorem}
The 2-monad $(T, e,m)$ on $\TVCat$ is a KZ monad.
\end{theorem}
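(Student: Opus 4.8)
The plan is to exhibit the KZ property of $(T,e,m)$ on $\TVCat$ via one of its standard equivalent formulations. The most convenient for our purposes is: a 2-monad is KZ (lax-idempotent) if and only if for every object $(X,a)$ there is a 2-cell $\varepsilon_{(X,a)}\colon m_{(X,a)}\cdot Te_{(X,a)}\Rightarrow 1_{T(X,a)}$ (equivalently, $m_{(X,a)}\dashv Te_{(X,a)}$ with identity counit), satisfying the unit-compatibility coherence with $e$. Since all our 2-functors and 2-natural transformations have been built via the adjunction $M\dashv K$ of Section~\ref{sect:FundAdj}, the cleanest route is to transport the problem across that adjunction: because $K$ is 2-monadic-like (it is the right adjoint realising $T=KM$, $e$ the unit, $m=Km_M M$ in the usual way), the KZ property of $T$ on $\TVCat$ will follow from a corresponding ``local adjointness'' statement one level down, together with the fact that $K$ preserves the relevant 2-cells.

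\emph{First} I would recall that for the adjunction $M\dashv K$ the induced monad has $T=KM$, unit $e=(e,\widetilde\alpha)$ as in (\ref{eq:unit}), and multiplication $m=K\varepsilon M$ where $\varepsilon\colon MK\to 1$ is the counit described at the end of Section~\ref{sect:FundAdj}. By the standard ``doctrinal'' criterion, $T$ is KZ as soon as $M$ is \emph{locally fully faithful in a suitable sense} — concretely, as soon as for each $(X,a)$ the unit component $(e_X,\widetilde\alpha_a)\colon (X,a)\to KM(X,a)$ admits a reflection 2-cell making $M$ left adjoint to $K$ with a property that upgrades to lax-idempotency. More hands-on: I would directly produce the 2-cell $\varepsilon_{(X,a)}\colon m_{(X,a)}\cdot (Te)_{(X,a)}\Rightarrow 1_{T(X,a)}$ in $\TVCat$. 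Unwinding the explicit descriptions, $Te_{(X,a)}$ is the $(\mT,\V)$-functor $(Te_X,\widetilde{\fii}_{Te_X})$ and $m_{(X,a)}$ is $(m_X,\widetilde\beta_a)$, so the composite underlying map is $m_X\cdot Te_X = 1_{TX}$ on the nose; what remains is to give a $(\mT,\V)$-natural transformation from the associated 2-cell back to $1_{Ta m_X^\circ m_X}$, i.e.\ a 2-cell $\zeta_0$ as in (\ref{eq:zeta0}), and this is forced: it is built from $\eta_a$, $\lambda_{m_X}$, $\rho_{m_X}$ and the invertibility of $\hat\beta$. I would then check the one triangle-type identity and the KZ coherence $(\varepsilon * e) \cdot (\text{assoc}) = \text{id}$, which amounts to the unit axiom in (\ref{eq:cat}) together with (\ref{eq:mon}).

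\emph{Alternatively}, and perhaps more economically, I would use that KZ-ness is detected \emph{after} applying the 2-functor $A_e\colon \TVCat\to\VCat$ together with the algebraic adjunction (\ref{eq:adj}), \emph{or} simply observe the following: the $\Set$-level monad $\mT$ is trivially KZ in the degenerate sense, the extension to $\VCat$ was shown to be a genuine 2-monad, and the comparison diagram (\ref{eq:Talg}) together with the adjunction (\ref{eq:ADJ}) exhibits the $\TVCat$-monad as induced by a pair of adjoint 2-functors in which the left adjoint $M$ sends $(X,a)$ to a \emph{free} object whose defining universal property (mapping out of $(TX,\ea)$ as a $\mT$-algebra) is given by a \emph{left} adjoint at the level of hom-categories — hence the inserter/lax-universal property that characterises KZ monads. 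The bookkeeping here is exactly the verification that the canonical 2-cell comparing $m_{(X,a)}$ with $Te_{(X,a)}$ is an adjunction with identity counit, which reduces, via the explicit formulas for $\widetilde\alpha$, $\widetilde\beta$, $\eta_{\ea}$ and $\mu_{\ea}$, to the $\V$-category axioms for $(TX,\ea)$ established in Section~\ref{sect:FundAdj} and to the hypothesis that $\hat\beta$ is invertible.

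\emph{The main obstacle} I anticipate is purely computational rather than conceptual: assembling the explicit mate 2-cell that witnesses $m_{(X,a)}\dashv Te_{(X,a)}$ and verifying it has identity counit. This forces one to paste together the diagram defining $\widetilde\fii_{Te_X}$, the diagram defining $\widetilde\beta_a$ (the multiplication component), and the formula (\ref{eq:etaa}) for $\eta_{\ea}$, and then to collapse the resulting composite using (\ref{eq:mon}), the invertibility of $\kappa_{-,f}$ and $\kappa_{f^\circ,-}$ for maps $f$, the invertibility of $\hat\beta$, and the $(\mT,\V)$-category axioms (\ref{eq:cat}) for $(X,a)$ — especially the unity axioms, which is where the ``collapse to the identity'' ultimately comes from. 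Once that single pasting identity is discharged, the KZ coherence condition relating $\varepsilon$ to the unit $e$ is a short additional diagram chase using (\ref{eq:nat}) and (\ref{eq:coh}), and the theorem follows.
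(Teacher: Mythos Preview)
Your proposal has the direction of the KZ characterization wrong. You propose to exhibit a 2-cell $\varepsilon_{(X,a)}\colon m_{(X,a)}\cdot Te_{(X,a)}\Rightarrow 1_{T(X,a)}$ and interpret this as ``$m_{(X,a)}\dashv Te_{(X,a)}$ with identity counit''. But $m\cdot Te=1$ is one of the monad unit laws, so that 2-cell is the identity for \emph{any} 2-monad and carries no content; and the adjunction $m\dashv Te$ (with identity counit, hence with a nontrivial unit $1_{T^2(X,a)}\Rightarrow Te\cdot m$) characterizes \emph{colax}-idempotent monads, not lax-idempotent ones. In the paper's convention (Section~7: pseudo-algebra structures are \emph{left} adjoint to units), the correct equivalent formulations are: a modification $\delta\colon Te\to eT$ with $m\delta=1$ and $\delta e=1$; or $m\dashv eT$ with identity counit; or $Te\dashv m$ with identity unit. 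You have swapped $Te$ and $eT$ throughout.

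The paper takes the first of these: it writes down an explicit $(\mT,\V)$-natural transformation $\delta_{(X,a)}\colon Te_{(X,a)}\to e_{T(X,a)}$, built from $T\eta_a$, instances of $\kappa$ and $\kappa^{-1}$, and the unit $\lambda_{m}$, and then verifies the two equations (\ref{eq:mod}), the second via (\ref{eq:mon}). If you repair the direction and aim instead for, say, the counit $Te\cdot m\Rightarrow 1_{T^2(X,a)}$ of $Te\dashv m$, you will end up assembling essentially the same ingredients, so the corrected approach would be equivalent to the paper's rather than genuinely different. Your ``alternative'' paragraph does not supply a shortcut: that $\mT$ on $\Set$ is ``trivially KZ'' carries no information (there are no nontrivial 2-cells in $\Set$), the free-monoid monad lifted to $\Cat$ is certainly not KZ, and neither the adjunction $M\dashv K$ nor any vague local-full-faithfulness of $M$ forces the \emph{direction} of the comparison 2-cell, which is precisely what is at issue.
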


\begin{proof}
One of the equivalent conditions expressing the KZ property is the existence of a modification $\delta : Te \to eT : T \to TT$ such that
\begin{equation}\tag{mod}\label{eq:mod}
\delta e = 1_{ee}\mbox{ and }m\delta = 1_{1_T}.
\end{equation}

For a $(\mT, \V)$-category $(X, a, \mu_a, \eta_a)$, let $\delta_{(X, a)}$ be the composite 2-cell
\[\xymatrix{e_{TX} \ar[r]^-{T^2\eta_a-}& T^2(ae_X)e_{TX} \ar[r]^{T\kappa_{a,e_X}-} &T(TaTe_X)e_{TX} \ar[r]^{\kappa_{Ta,Te_X}-} & T^2aT^2e_Xe_{TX}}\]
\[\xymatrix{= T(Ta)e_{T^2X}Te_X\ar[rrr]^-{T(Ta\lambda_{m_X})\lambda_{m_{TX}}-}&&& T(Tam_X^\circ m_X)m_{TX}^\circ m_{TX}e_{T^2X}Te_X.}\]
This defines a  $(\mT, \V)$-natural transformation
\[\delta_{(X,a)}:(Te_X, T\widetilde{\alpha}_a) \to (e_{TX}, \widetilde{\alpha}_{\ea m_X}).\]
The family of these natural transformations gives the required modification $Te \to eT$. The first of the two required equalities (\ref{eq:mod}) is straightforward. The second one follows from (mon).
\end{proof}

\section{Representable $(\mT,\V)$-categories: from Nachbin spaces\\to Hermida's representable multicategories}

Being a KZ monad, for the monad $\mT$ on $\TVCat$ a $\mT$-algebra structure on a $(\mT,\V)$-category $(X,a)$ is, up to isomorphism, a reflective left adjoint to the unit $e_{(X,a)}$; hence, having a $\mT$-algebra structure is a property, rather than an additional structure, for any $(\mT,\V)$-category. As Hermida in \cite{Her00}, we say that:
\begin{definition}
A $(\mT,\V)$-category is \emph{representable} if it has a pseudo-algebra structure for $\mT$.
\end{definition}

In the diagram below $(\TVCat)^\mT$ is the 2-category of $\mT$-algebras, $F^\mT\dashv G^\mT$ is the corresponding adjunction, and $\widetilde{K}$ is the comparison 2-functor:
\[\xymatrix{(\VCat)^\mT\ar@{}[rd]|{\top}\ar[rr]^{\widetilde{K}}\ar@<1ex>[rd]^K&&(\TVCat)^\mT.\ar@<1ex>[ld]^{G^\mT}\ar@{}[ld]|{\bot}\\
&\TVCat\ar@<1ex>[lu]^M\ar@<1ex>[ru]^{F^\mT}}\]
The composition of the adjunctions $F^\mT\dashv G^\mT$ and $A^\circ\dashv A_e$ (see (\ref{eq:adj}) in Section \ref{sect:three}) gives an adjunction $F_e^\mT\dashv G_e^\mT$ that induces again the monad $\mT$ on $\VCat$. Let $\widetilde{A}_e$ be the corresponding comparison 2-functor as depicted in the following diagram:
\[\xymatrix{(\VCat)^\mT\ar@<-1ex>@{}[rddd]|{\top}\ar@{.>}[rddd]\ar@{}[rd]|{\top}\ar@<-1ex>[rr]_{\widetilde{K}}\ar@<1ex>[rd]^K&&(\TVCat)^\mT.
\ar@<1ex>[ld]^{G^\mT}\ar@{}[ld]|{\bot}\ar@{.>}@<-1ex>[ll]_{\widetilde{A}_e}\ar@{.>}@<2ex>[lddd]^{G_e^\mT}\ar@{}@<1ex>[lddd]|{\bot}\\
&\TVCat\ar@<1ex>[lu]^M\ar@<1ex>[ru]^{F^\mT}\ar@{.>}@<1ex>[dd]^{A_e}\ar@{}[dd]|{\dashv}\\
\\
&\VCat\ar@{.>}@<2ex>[luuu]\ar@{.>}[ruuu]^{F_e^\mT}\ar@<1ex>@{.>}[uu]^{A^\circ}}\]

\begin{theorem}\label{th:rep}
$\widetilde{K}$ and  $ \widetilde{A}_e$ define an adjoint 2-equivalence.
\end{theorem}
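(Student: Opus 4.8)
The plan is to show that $\widetilde A_e$ is an inverse (up to 2-natural isomorphism, and ultimately an equivalence) to $\widetilde K$ by comparing the two adjunctions involved over the common base $\TVCat$. Since $\widetilde K$ is the comparison functor for $M\dashv K$ and we have the factorisation $G^\mT_e = A_e\circ G^\mT$, $F^\mT_e = F^\mT\circ A^\circ$, the composite $\widetilde A_e\circ\widetilde K$ is the comparison 2-functor $(\VCat)^\mT\to(\VCat)^\mT$ induced by the endomonad $\mT$ on $\VCat$ via the adjunction $F^\mT_e\dashv G^\mT_e$; since that adjunction is itself monadic (it \emph{is} the Eilenberg--Moore adjunction of the $2$-monad $\mT$ on $\VCat$), the comparison is (2-naturally isomorphic to) the identity. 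Dually, I must show $\widetilde K\circ\widetilde A_e\cong\Id_{(\TVCat)^\mT}$, and here the real work lies. I would exhibit, for each $\mT$-algebra $(\xxx,\xi)$ in $\TVCat$, a canonical $\mT$-algebra isomorphism between $(\xxx,\xi)$ and $\widetilde K\widetilde A_e(\xxx,\xi)$, natural in $(\xxx,\xi)$ and compatible with the $2$-cells.

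The first concrete step is to unwind $\widetilde A_e$. Because the monad $\mT$ on $\TVCat$ is KZ (by the previous theorem), a $\mT$-algebra structure on a $(\mT,\V)$-category $(X,a)$ is, up to isomorphism, a left adjoint $r$ to the unit $e_{(X,a)}\colon(X,a)\to(TX,\ea m_X)$ with $r\cdot e_{(X,a)}\cong 1$; and such data are uniquely determined. So I would first give an explicit description of $\widetilde A_e(\xxx,\xi)$ as the $\V$-category $A_e(\xxx) = (X, ae_X)$ equipped with the $\mT$-algebra structure transported across $G^\mT_e\dashv F^\mT_e$; concretely this amounts to producing, from the structure map $\xi\colon(TX,\ea m_X)\to(X,a)$, a $\V$-functor $(TX,\ea)\to(X,ae_X)$ satisfying the $\mT$-algebra axioms. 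Then $\widetilde K$ sends this to the $(\mT,\V)$-category whose relational structure is the composite of the $\V$-enrichment of $A_e(\xxx)$ with the algebra map, and I must check this composite recovers $a$ on the nose (or up to coherent iso), using that $\xi$ is a genuine algebra — i.e.\ that $\xi\cdot e_{(X,a)} = 1_X$ with its structural $2$-cell being an identity, which forces $ae_X$ composed appropriately back to $a$ via the formula $a \cong aTa m_X^\circ \cdot(\text{structure})$. The pseudo-nature of $\hat\beta$ (our standing assumption from Section~\ref{sect:FundAdj}) is what makes these composites behave, so I would lean on it throughout.

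The main obstacle I anticipate is the \emph{faithful bookkeeping of the $2$-cells}: $(\TVCat)^\mT$ is a genuine $2$-category of (strict or pseudo) algebras, and the equivalence must be checked not only on objects and $1$-cells but on $2$-cells, meaning every coherence diagram of the form (\ref{eq:cat}), (\ref{eq:fun}), (\ref{eq:mon}) has to be shown to transport correctly under $\widetilde K\widetilde A_e$ and its inverse. In practice this is a long diagram chase: one pastes the defining cell $\widetilde\alpha_a$ of the unit $e_{(X,a)}$ against the (transported) algebra cell $\fii_\xi$, invokes a triangle identity for the adjunction $r\dashv e_{(X,a)}$ — this is exactly where Proposition~\ref{th:adj} ($g^\circ a\cong bTf$ for an adjunction) does the heavy lifting, identifying $r^\circ\cdot(ae_X)$ with the relational part $a$ — and then reduces via (\ref{eq:mon}) and the naturality conditions (\ref{eq:nat}) to an identity. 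I would organise the proof so that Proposition~\ref{th:adj}, applied to the adjunction $e_{(X,a)}\dashv r$ inside $\TVCat$, is the pivot that turns "algebra structure" into the relational identity $a\cong r^\circ(ae_X)$, after which both triangle equalities of the claimed $2$-equivalence follow from the KZ identities $\delta e=1$, $m\delta=1$ together with the triangle identities of $M\dashv K$ and $A^\circ\dashv A_e$. The remaining verifications — that the isomorphisms are $2$-natural and strictly compatible with horizontal and vertical composition — are then routine and I would relegate them to "straightforward calculations", as is done elsewhere in the paper.
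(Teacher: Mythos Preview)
Your overall strategy matches the paper's: for $\widetilde A_e\widetilde K\cong 1$ the paper simply says it ``can be directly verified'' (your monadicity argument is a perfectly good alternative), and for $\widetilde K\widetilde A_e\cong 1$ the paper does exactly what you propose --- recognise the algebra structure map as a left adjoint to the unit via the KZ property, then invoke Proposition~\ref{th:adj} to obtain the needed isomorphism of $\V$-relations.

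Two slips are worth correcting before you write this up. First, you state the adjunction both ways: initially ``a left adjoint $r$ to the unit $e_{(X,a)}$'' (correct for KZ: $r\dashv e_{(X,a)}$), but later ``the adjunction $e_{(X,a)}\dashv r$''. Second, and more substantively, the isomorphism that Proposition~\ref{th:adj} actually delivers here is
\[
\omega\colon e_X^\circ\,(\ea m_X)\;\cong\; a\,Tr,
\]
not ``$r^\circ(ae_X)\cong a$'' (which is not even well-typed: the left side is a relation $X\relto TX$, the right side $TX\relto X$). The paper still needs one more manoeuvre to get from $\omega$ to the desired isomorphism $\iota\colon ae_Xr\cong a$ of the underlying $(\mT,\V)$-structures: it writes $ae_Xr=a\,Tr\,e_{TX}$ by naturality of $e$, applies $\omega^{-1}$, uses the monad identity $m_Xe_{TX}=m_XTe_X$ to swap one unit for the other, applies $\omega$ again, and finally uses $r\cdot e_X=1$ to land on $a$. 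This little zig-zag is the only non-formal step in the argument, and it is precisely the place your sketch glosses over; once you fill it in, the rest (2-naturality, compatibility with 2-cells) is indeed routine as you say.
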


\begin{proof}
The isomorphism  $\widetilde{A}_e\widetilde{K} \cong 1$ can be directly verified. We will establish that  $\widetilde{K}\widetilde{A}_e \cong 1$.

Suppose that a $(\mT, \V)$-functor $(f, \fii_f) : T(X, a) \rightarrow (X, a)$ is a $\mT$-algebra structure on a $(\mT, \V)$-category $(X, a)$. Observe that the underlying $\V$-relation of the representable $(\mT, \V)$-category $\widetilde{K}\widetilde{A}_e((X, a), (f, \fii_f))$ is $ae_Xf : TX \rel TX$.

Since $\mT$ is a KZ monad, following \cite{KellyLack},  $(f, \fii_f)$ is a left adjoint to the unit $(e_X, \widetilde{\alpha}_a)$ of $\mT$. By Proposition \ref{th:adj} we get an isomorphism
\[\omega : e^\circ_XTam^\circ_Xm_X \rightarrow aTf.\]
Let $\iota$ denote the composite isomorphism
\[ae_Xf = aTfe_{T_X} \xrightarrow{\omega^{-1}-} e^\circ_XTam^\circ_Xm_Xe_{T_X} =e^\circ_XTam^\circ_Xm_XTe_{X} \xrightarrow{\omega-} aTfTe_{X} = a.\]
It can be verified that the pair $(1_X, \iota)$ is an isomorphism $\widetilde{K}\widetilde{A}_e((X, a), (f, \fii_f)) \to ((X, a), (f, \fii_f))$ in $(\TVCat)^{\mT}$. The family of these morphisms determine the required 2-natural isomorphism  $\widetilde{K}\widetilde{A}_e \cong 1$.
\end{proof}

We explain now how representable $(\mT,\V)$-categories capture two important cases which were developed independently.

\subsection*{Nachbin's ordered compact Hausdorff spaces.}
For $\V = 2$ and $\mT=\mU=(U,e,m)$ the ultrafilter monad extended to $\two$-$\Rel=\Rel$ as in \cite{Ba}, so that, for any relation $r:X\relto Y$, $Ur=Uq(Up)^\circ$, where $p:R\to X$, $q:R\to Y$ are the projections of $R=\{(x,y)\mid x\,r\,y\}$. Then $\Cats{2}\simeq\Ord$ and the functor $U:\Ord\to\Ord$ sends an ordered set $(X,\le)$ to $(UX,U\!\le)$ where
\[
 \xx\,(U\!\le)\,\yy\hspace{1em}\text{whenever}\hspace{1em}\forall A\in\xx,B\in\yy\exists x\in A,y\in B\,.\,x\le y,
\]
for all $\xx,\yy\in UX$. The algebras for the monad $\mU$ on $\Ord$ are precisely the ordered compact Hausdorff spaces as introduced in \cite{Nac50}:

\begin{definition}
An \emph{ordered compact Hausdorff space} is an ordered set $X$ equipped with a compact Hausdorff topology so that the graph of the order relation is a closed subset of the product space $X\times X$.
\end{definition}

We denote the category of ordered compact Hausdorff spaces and monotone and continuous maps by $\ORDCH$. It is shown in \cite{Tho09} that, for a compact Hausdorff space $X$ with ultrafilter convergence $\alpha:UX\to X$ and an order relation $\le$ on $X$, the set $\{(x,y)\mid x\le y\}$ is closed in $X\times X$ if and only if $\alpha:UX\to X$ is monotone; and this shows
\[
 \ORDCH\simeq\Ord^\mU,
\]
and the diagram (\ref{eq:Talg}) at the end of Section \ref{sect:VCatMonad} becomes
\[
\xymatrix@!R=5ex{\CompHaus\ar@{}[d]|{\dashv}\ar@<1ex>[d]^{U^\mT}\ar@<-1ex>[rr]\ar@{}[rr]|{\top}&&\ORDCH\ar@{}[d]|{\dashv}\ar@<1ex>[d]^{U^\mT}\ar@<-1ex>[ll]\\
\Set\ar@<1ex>[u]^{F^\mT}\ar@<-1ex>[rr]\ar@{}[rr]|{\top}&&\Ord.\ar@<-1ex>[ll]\ar@<1ex>[u]^{F^\mT}}
\]
The functor $K:\ORDCH\to\TOP=(\mU,\two)$-$\Cat$ of Section \ref{sect:FundAdj} can now be described as sending $((X,\leq),\alpha:UX\to X)$ to the space $KX=(X,a)$ with ultrafilter convergence $a:UX\relto X$ given by the composite
\[\xymatrix{UX\ar[r]^-{\alpha}&X\ar[r]|-{\object@{|}}^-{\leq}&X;}\]
of the order relation $\le:X\relto X$ of $X$ with the ultrafilter convergence $\alpha:UX\to X$ of the compact Hausdorff topology of $X$. In terms of open subsets, the topology of $KX$ is given precisely by those open subsets of the compact Hausdorff topology of $X$ which are down-closed with respect to the order relation of $X$. On the other hand, for a topological space $(X,a)$, the ordered compact Hausdorff space $MX$ is the set $UX$ of all ultrafilters of $X$ with the order relation
\[\xymatrix{UX\ar[r]|-{\object@{|}}^-{m_X^\circ}&UUX\ar[r]|-{\object@{|}}^-{Ua}&UX,}\]
and with the compact Hausdorff topology given by the convergence $m_X:UUX\to UX$; put differently, the order relation on $UX$ is defined by
\[
 \xx\le\yy\iff\forall A\in\xx\,.\,\overline{A}\in\yy,
\]
and the compact Hausdorff topology on $UX$ is generated by the sets
\[
 \{\xx\in UX\mid A\in\xx\}\hspace{2em}(A\subseteq X).
\]
%The unit of the adjunction $M\dashv K$ is given by the family $(e_X:X\to UX)$ of continuous maps, and the $X$-component of the counit is the ultrafilter convergence $\alpha:UX\to X$ of the compact Hausdorff topology of $X$.
The monad $\mU=(U,e,m)$ on $\Top$ induced by the adjunction $M\dashv K$ assigns to each topological space $X$ the space $UX$ with basic open sets
\[
 \{\xx\in UX\mid A\in\xx\}\hspace{2em}(A\subseteq X\text{ open}).
\]
By definition, a topological space $X$ is called \emph{representable} if $X$ is a pseudo-algebra for $\mU$, that is, whenever $e_X:X\to UX$ has a (reflective) left adjoint. Note that a left adjoint of $e_X:X\to UX$ picks, for every ultrafilter $\xx$ on $X$, a smallest convergence point of $\xx$. The following result provides a characterisation of representable topological spaces.

\begin{theorem}
Let $X$ be a topological space. The following assertions are equivalent.
\begin{enumerate}[\em (i)]
\item $X$ is representable.
\item $X$ is locally compact and every ultrafilter has a smallest convergence point.
\item $X$ is locally compact, weakly sober and the way-below relation on the lattice of open subsets is stable under finite intersection.
\item $X$ is locally compact, weakly sober and finite intersections of compact down-sets are compact.
\end{enumerate}
\end{theorem}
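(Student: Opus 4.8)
The plan is to derive $(i)\Leftrightarrow(ii)$ from the Kock--Z\"oberlein machinery of the previous sections, to recall that $(iii)\Leftrightarrow(iv)$ is classical once local compactness is in place, and then to bridge the two descriptions by a direct point-set argument establishing $(ii)\Leftrightarrow(iv)$. Throughout, local compactness is the hypothesis common to all four statements, so the substance is the analysis of the ``least convergence point'' condition.

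For $(i)\Leftrightarrow(ii)$ I would argue as follows. By the previous theorem, specialised to $\V=\two$ and $\mT=\mU$, the monad $\mU$ on $\Top=(\mU,\two)$-$\Cat$ is of Kock--Z\"oberlein type; hence, as already noted, $X$ is representable precisely when the unit $e_X\colon X\to UX$ admits a left adjoint $s\colon UX\to X$ in $\Top$, automatically reflective (i.e.\ $s\cdot e_X\cong 1_X$) by lax-idempotence. Unravelling the adjunction $s\dashv e_X$ in $\Top$, whose hom-orders are the specialisation orders, the unit and counit force $s(\xx)$ to be a convergence point of $\xx$, and the least such; so representability forces every ultrafilter to have a least convergence point, and the only remaining content is the continuity of $s$. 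Recalling that the basic opens of $UX$ are the sets $\{\xx\in UX\mid A\in\xx\}$ for $A\subseteq X$ open, continuity of $s$ becomes openness of $\{\xx\mid s(\xx)\in A\}$ for every open $A$; a direct computation, comparing for $x\in A$ the ultrafilters refining the neighbourhood filter of $x$, should identify this with the requirement that every point of $A$ have a compact neighbourhood contained in $A$, i.e.\ with local compactness, and conversely produce a continuous left adjoint $s$ out of local compactness together with the existence of least limits. Alternatively one may route this through Proposition~\ref{th:adj}, which turns $s\dashv e_X$ into an isomorphism $e_X^\circ\,Ua\,m_X^\circ\,m_X\cong a\,Us$ of $\two$-relations whose pointwise reading encodes both local compactness and the existence of least limits.

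The equivalence $(iii)\Leftrightarrow(iv)$ is independent of the monad, and I would simply invoke the classical theory: under local compactness ($=$ core-compactness of $\calO(X)$) and weak sobriety one has $V\ll U$ in $\calO(X)$ if and only if there is a compact down-set (compact saturated set) $Q$ with $V\subseteq Q\subseteq U$, so that $\ll$ is stable under binary meets exactly when binary intersections of compact down-sets are compact. This is a form of the Hofmann--Mislove theorem; see \cite{GHK+80,HST14}.

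The bridge $(ii)\Leftrightarrow(iv)$ is where the real work sits, and I expect it to be the main obstacle. Assuming $X$ locally compact throughout, one must match ``every ultrafilter has a least convergence point'' with ``weakly sober, and binary intersections of compact down-sets are compact''. For the forward implication I would use the least-convergence-point map $s$: weak sobriety by refining a completely prime filter of opens to an ultrafilter and checking that the least convergence point realises that filter as its neighbourhood filter; the intersection condition by covering $Q_1\cap Q_2$ by opens, interpolating via local compactness, and invoking minimality of $s$ on a suitably chosen ultrafilter to extract a finite subcover. For the converse, weak sobriety makes the set of convergence points of an ultrafilter $\xx$ --- a down-set cut out by the open members of $\xx$ --- non-empty, and, exhibited as a filtered intersection of compact down-sets, it acquires a least element once finite intersections of compact down-sets are compact. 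The delicate part throughout is the bookkeeping relating ultrafilters, their traces on opens, Scott-open filters of $\calO(X)$, and compact saturated sets; by comparison, $(i)\Leftrightarrow(ii)$ is essentially formal, being read off from the Kock--Z\"oberlein property.
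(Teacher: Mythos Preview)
The paper does not actually prove this theorem. It is stated as a known characterisation of representable (equivalently, stably compact) spaces, and the surrounding text refers the reader to \cite{GHK+03,Jun04,Law11,Sim82} for the relevant theory; no argument is given in the paper itself. So there is no ``paper's own proof'' to compare your proposal against.

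That said, your outline is a reasonable reconstruction of how such a proof goes, and the one piece that genuinely connects to this paper's framework --- the equivalence $(i)\Leftrightarrow(ii)$ via the KZ property --- is correctly identified. The paper itself remarks, just before stating the theorem, that a left adjoint of $e_X$ picks smallest convergence points; your reading that continuity of this section against the basic opens $\{\xx\mid A\in\xx\}$ is what encodes local compactness is the right idea. The remaining equivalences $(ii)\Leftrightarrow(iii)\Leftrightarrow(iv)$ are classical domain theory (Hofmann--Mislove, interpolation in locally compact sober spaces, compact saturated sets), and while your sketch for $(ii)\Leftrightarrow(iv)$ is admittedly loose, it points at the standard ingredients. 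In short: the paper outsources this result, and your proposal supplies more of a proof than the paper does.
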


Representable T$_0$-spaces are known under the designation \emph{stably compact spaces}, and are extensively studied in \cite{GHK+03,Jun04,Law11} and  \cite{Sim82} (called \emph{well-compact spaces} there). One can also find there the following characterisation of morphisms between representable spaces.

\begin{theorem}
Let $f:X\to Y$ be a continuous map between representable spaces. Then the following are equivalent.
\begin{enumerate}[\em (i)]
\item $f$ is a pseudo-homomorphism.
\item For every compact down-set $K\subseteq Y$, $f^{-1}(K)$ is compact.
\item The frame homomorphism $f^{-1}:\calO Y\to\calO X$ preserves the way-below relation.
\end{enumerate}
\end{theorem}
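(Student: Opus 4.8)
The plan is to first translate the 2-categorical condition (i) into a concrete inequality between least convergence points, and then prove the cycle (i)$\Rightarrow$(ii)$\Rightarrow$(iii)$\Rightarrow$(i). Recall the orientation fixed by the preceding description: here \emph{down-set} means down-closed for the order of $KX$, which is the saturation order, so that compact down-sets are precisely the compact saturated sets, and $s_X\colon UX\to X$ (the reflective left adjoint of $e_X$ provided by representability) sends an ultrafilter to its least convergence point. Since $\mT=\mU$ is a KZ monad, by \cite{KellyLack} a continuous map $f\colon X\to Y$ between representable spaces is a pseudo-homomorphism exactly when its canonical comparison $2$-cell is invertible. For $\V=\two$ this $2$-cell is an inequality in the specialization preorder, and continuity already gives one direction: as $\xx$ converges to $s_X\xx$, the pushed-forward ultrafilter $Uf(\xx)$ converges to $f(s_X\xx)$, so $f(s_X\xx)$ is a convergence point of $Uf(\xx)$ and hence lies above the least one, $s_Y(Uf\xx)\le f(s_X\xx)$. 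Thus \emph{condition (i) is equivalent to the reverse inequality $f(s_X\xx)\le s_Y(Uf\xx)$ for every $\xx\in UX$}, and this is the form of (i) I would work with.

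Before the three implications I would record two auxiliary facts, both drawn from the preceding characterisation theorem (local compactness, weak sobriety, and multiplicativity of $\ll$). \emph{Fact (A):} for a down-set $D$, one has that $D$ is compact if and only if $s(\xx)\in D$ whenever $D\in\xx$. This follows from the ultrafilter characterisation of compactness ($D$ is compact iff every ultrafilter containing $D$ has a limit in $D$), together with the observation that the convergence points of $\xx$ are exactly the points above $s(\xx)$, so that a down-set meets this up-set precisely when it contains $s(\xx)$. \emph{Fact (B):} for open $W,W'$ in a representable space, $W\ll W'$ holds if and only if there is a compact down-set $Q$ with $W\subseteq Q\subseteq W'$; consequently, if $W\ll W'$ and $W\in\xx$, then $Q\in\xx$ and (A) yields $s(\xx)\in Q\subseteq W'$. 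I would also use the elementary point that the saturation of a point $y$ is the intersection of its open neighbourhoods, so that $p\le y$ iff $p$ belongs to every open neighbourhood of $y$.

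With these in hand the implications are short. For \textbf{(i)$\Rightarrow$(ii)}, let $K\subseteq Y$ be a compact down-set; then $f^{-1}(K)$ is a down-set, and if $f^{-1}(K)\in\xx$ then $K\in Uf(\xx)$, so (A) applied in $Y$ gives $s_Y(Uf\xx)\in K$; by (i) we get $f(s_X\xx)\le s_Y(Uf\xx)\in K$, hence $f(s_X\xx)\in K$, i.e. $s_X\xx\in f^{-1}(K)$, and (A) in $X$ makes $f^{-1}(K)$ compact. For \textbf{(ii)$\Rightarrow$(iii)}, given $U\ll V$ in $\calO Y$ choose by (B) a compact down-set $Q$ with $U\subseteq Q\subseteq V$; applying the frame homomorphism $f^{-1}$ yields $f^{-1}U\subseteq f^{-1}Q\subseteq f^{-1}V$, where $f^{-1}Q$ is a compact down-set by (ii), and an open set squeezed between two opens by a compact saturated set is way-below, so $f^{-1}U\ll f^{-1}V$. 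For \textbf{(iii)$\Rightarrow$(i)}, fix an open $U$ containing $y_0:=s_Y(Uf\xx)$; by local compactness of $Y$ pick an open $V$ with $y_0\in V$ and $V\ll U$; then $V\in Uf(\xx)$, so $f^{-1}V\in\xx$, and (iii) gives $f^{-1}V\ll f^{-1}U$, whence (B) in $X$ yields $s_X\xx\in f^{-1}U$, that is $f(s_X\xx)\in U$. As $U$ ranges over the open neighbourhoods of $y_0$, this gives $f(s_X\xx)\le y_0=s_Y(Uf\xx)$, which is exactly (i).

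I expect the main obstacle to be conceptual rather than computational: pinning down the correct reading of condition (i), namely that the KZ comparison cell is the automatically-available inequality $s_Y\cdot Uf\le f\cdot s_X$, so that being a pseudo-homomorphism is precisely the single extra inequality, and keeping the saturation/down-set orientation consistent throughout. The second delicate point is assembling Fact (B), i.e. the approximation of $\ll$ by compact down-sets together with the sandwich criterion for way-below; these rest squarely on the local compactness, weak sobriety and multiplicativity of $\ll$ guaranteed by the preceding characterisation of representable spaces. Once (A) and (B) are established, the remaining ultrafilter bookkeeping in the three implications is routine.
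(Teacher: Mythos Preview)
Your proof is correct, but note that the paper does not actually prove this theorem: it is stated without proof and attributed to the literature on stably compact spaces (references \cite{GHK+03,Jun04,Law11,Sim82}). So there is no ``paper's own proof'' to compare against; what you have written is a self-contained verification that could stand in for the missing argument.

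The translation of (i) into the single pointwise inequality $f(s_X\xx)\le s_Y(Uf\xx)$ via the KZ property is exactly right, and your Facts (A) and (B) are the standard bridges between the ultrafilter/least-limit picture and the compact-saturated/way-below picture. Both hold under the hypotheses supplied by the preceding characterisation theorem: (A) needs only that $s$ picks least convergence points, and for (B) local compactness gives the interpolating compact set while saturation (an intersection of opens, hence contained in any open superset) makes it a down-set. The cycle (i)$\Rightarrow$(ii)$\Rightarrow$(iii)$\Rightarrow$(i) then goes through as you describe. One step worth making explicit in (iii)$\Rightarrow$(i): you use $V\in Uf(\xx)$, which holds because $y_0=s_Y(Uf\xx)$ is in particular a convergence point of $Uf(\xx)$, so every open neighbourhood of $y_0$ belongs to $Uf(\xx)$; this is true but passes quickly in your write-up.
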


\subsection*{Hermida's representable multicategories}

We sketch now some of the main achievements of \cite{Her00,Her01} which fit in our setting and can be seen as counterparts to the classical topological results mentioned above. In \cite{Her00,Her01} Hermida is working in a finitely complete category $\catB$ admitting free monoids so that the free-monoid monad $\mM=\mmonad$ is Cartesian; however, for the sake of simplicity we consider only the case $\catB=\Set$ here. We write $\SPAN$ to denote the bicategory of spans in $\Set$, and recall that a \emph{category} can be viewed as a span
\[
 \xymatrix{& C_1\ar[dl]_d\ar[dr]^c\\ C_0 && C_0}
\]
which carries the structure of a monoid in the category $\SPAN(C_0,C_0)$. The 2-category of monoids in $\Cat$ (aka strict monoidal categories) and strict monoidal functors is denoted by $\MONCAT$, and the diagram (\ref{eq:Talg})
%at the end of Section \ref{sect:VCatMonad}
becomes

\[\xymatrix@!R=5ex{\Mon\ar@{}[d]|{\dashv}\ar@<1ex>[d]^{U^\mT}\ar@<-1ex>[rr]\ar@{}[rr]|{\top}&&\MONCAT\ar@{}[d]|{\dashv}\ar@<1ex>[d]^{U^\mT}\ar@<-1ex>[ll]\\
\Set\ar@<1ex>[u]^{F^\mT}\ar@<-1ex>[rr]\ar@{}[rr]|{\top}&&\Cat.\ar@<-1ex>[ll]\ar@<1ex>[u]^{F^\mT}}\]

A \emph{multicategory} can be viewed as a span
\[
 \xymatrix{& C_1\ar[dl]_d\ar[dr]^c\\ MC_0 && C_0}
\]
in $\Set$ together with a monoid structure in an appropriate category. This amounts to the following data:
\begin{enumerate}[--]
\item a set $C_0$ of objects;
\item a set $C_1$ of arrows where the domain of an arrow $f$ is a sequence $(X_1,X_2,\ldots,X_n)$ of objects and the codomain is an object $X$, depicted as
\[
 f:(X_1,X_2,\ldots,X_n)\to X;
\]
\item an identity $1_X:(X)\to X$;
\item a composition operation.
\end{enumerate}
The 2-category of multicategories, morphisms of multicategories and appropriate 2-cells is denoted by $\MultiCat$. Keeping in mind that $\SPAN$ is equivalent to $\Rels{\Set}$, for $\V=\Set$ and $\mT=\mM$, the fundamental adjunction (\ref{eq:ADJ}) of Section \ref{sect:FundAdj} specialises to:

\begin{theorem}
There is a 2-monadic 2-adjunction $\MultiCat\adjunct{M}{K}\MONCAT$.
\end{theorem}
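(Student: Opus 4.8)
The plan is to obtain the statement as the instance of the fundamental 2-adjunction~(\ref{eq:ADJ}), $(\VCat)^\mT\adjunct{M}{K}\TVCat$, for $\V=\Set$ and $\mT=\mM=\mmonad$ the free-monoid monad on $\Set$. Here $\Set$ carries its cartesian monoidal-closed structure, so that the initial object is $\bot=\emptyset$ and $\Rels{\Set}$ is (equivalent to) $\SPAN$, and $\mM$ is extended to $\SPAN$ in the canonical way, by applying $M$ to the apex of a span. The first step is to check that this extension satisfies the basic assumptions of Section~\ref{sect:three} together with the running hypothesis of Section~\ref{sect:FundAdj}. This is exactly where cartesianness of $\mM$ is used: since $M$ preserves pullbacks and $e,m$ are cartesian natural transformations, the span extension is a pseudofunctor, the comparison cells $\kappa_{s,r},\alpha_r,\beta_r$ and hence their mates $\hat\alpha_r,\hat\beta_r$ are invertible, and $M(f^\circ)=(Mf)^\circ$ for every map $f$; in particular $\hat\beta_r$ is an isomorphism for every span $r$, which is precisely the hypothesis under which~(\ref{eq:ADJ}) was built. (These are standard facts about cartesian monads; see \cite{CT03,HST14}.)

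The second step is to identify the two 2-categories occurring in~(\ref{eq:ADJ}). An $(\mM,\Set)$-category is a set $C_0$ of objects together with a span $a\colon MC_0\relto C_0$, i.e.\ a family of sets $\hom((X_1,\dots,X_n),X)$ of multimorphisms, whose unit $\eta_a$ and multiplication $\mu_a$ --- by the axioms~(\ref{eq:cat}) --- supply identities and a unital, associative composition; unwinding the definitions shows that $(\mM,\Set)$-categories, $(\mM,\Set)$-functors and $(\mM,\Set)$-natural transformations are precisely multicategories, multifunctors and their 2-cells, so that $\TVCat\simeq\MULTICAT$ (see \cite{CT03,HST14}). On the other side $\VCat=\Cat$, and the 2-monad $T$ on $\Cat$ produced in Section~\ref{sect:VCatMonad} sends a category $C$ to $\coprod_{n}C^{n}$ --- objects: finite lists of objects of $C$; hom-objects: products of hom-objects over lists of equal length --- with unit inserting an object as a one-element list and multiplication flattening lists of lists by concatenation; that is, $T$ is the free-monoid (equivalently, free strict monoidal category) 2-monad on $\Cat$. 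Its strict algebras are exactly monoids in $\Cat$, i.e.\ strict monoidal categories, and its algebra morphisms are strict monoidal functors, whence $(\VCat)^\mT\simeq\MONCAT$.

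With these identifications in hand, the theorem of Section~\ref{sect:FundAdj} ($M$ is a left 2-adjoint to $K$) immediately yields a 2-adjunction between $\MONCAT$ and $\MULTICAT$; concretely, the right 2-adjoint is the underlying-multicategory 2-functor $\MONCAT\to\MULTICAT$ and its left 2-adjoint is the free strict monoidal category on a multicategory. It then remains to argue that this 2-adjunction is 2-monadic. The induced 2-monad $KM=T$ on $\TVCat\simeq\MULTICAT$ is of Kock-Z\"oberlein type, as established above, and by Theorem~\ref{th:rep} the comparison 2-functor $\widetilde{K}\colon(\VCat)^\mT\to(\TVCat)^\mT$ is an adjoint 2-equivalence; transporting along the identifications of the previous paragraph, $\MONCAT$ is 2-equivalent to the 2-category of strict algebras for the induced 2-monad on $\MULTICAT$, which is exactly the assertion that the 2-adjunction is 2-monadic.

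The only genuinely non-automatic part is the translation in the second paragraph: verifying in detail that the generalized-monad axioms~(\ref{eq:cat}) over $\SPAN$ reproduce the identity and associativity laws of a multicategory, and that the $\VCat$-monad $T$ built from $\mM$ in Section~\ref{sect:VCatMonad} coincides, up to isomorphism, with the familiar free strict monoidal category 2-monad on $\Cat$ (so that its strict algebras really are strict monoidal categories). Both are essentially routine unwinding --- the former is carried out in \cite{CT03,HST14} --- but it is here, rather than in the formal deduction from~(\ref{eq:ADJ}), that the concrete content of the theorem resides.
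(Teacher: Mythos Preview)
Your proposal is correct and follows exactly the paper's approach: the theorem is presented there as the specialisation of the fundamental adjunction~(\ref{eq:ADJ}) to $\V=\Set$ and $\mT=\mM$, with the explicit descriptions of $K$ and $M$ given immediately after the statement, and 2-monadicity then coming from Theorem~\ref{th:rep}. You have simply spelled out the verification steps (cartesianness of $\mM$ giving the required invertibility of $\hat\beta$, and the identifications $\TVCat\simeq\MULTICAT$, $(\VCat)^\mT\simeq\MONCAT$) in more detail than the paper does.
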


Here, for a strict monoidal category
\[
 \xymatrix{& C_1\ar[dl]_d\ar[dr]^c\\ C_0 && C_0}
\]
with monoid structure $\alpha:MC_0\to C_0$ on $C_0$, the corresponding multicategory is given by the composite of
\[
 \xymatrix{& MC_0\ar[dl]_1\ar[dr]^\alpha && C_1\ar[dl]_d\ar[dr]^c\\ MC_0 && C_0 && C_0}
\]
in $\SPAN$; and to a multicategory
\[
 \xymatrix{& C_1\ar[dl]_d\ar[dr]^c\\ MC_0 && C_0}
\]
one assigns the strict monoidal category
\[
 \xymatrix{&& MC_1\ar[dl]_d\ar[dr]^c\\ & MMC_0\ar[dl]_{m_{C_0}} && MC_0\\ MC_0}
\]
where the objects in the span are free monoids.

The induced 2-monad on $\MultiCat$ is of Kock-Z\"oberlein type, and a \emph{representable multicategory} is a pseudo-algebra for this monad. In elementary terms, a multicategory
\[
 \xymatrix{& C_1\ar[dl]_d\ar[dr]^c\\ MC_0 && C_0}
\]
is representable precisely if for every $(x_1,\ldots,x_n)\in MC_0$ there exists a morphism (called universal arrow)
\[
 (x_1,\ldots,x_n)\to \otimes(x_1,\ldots,x_n)
\]
which induces a bijection
\[
 \hom((x_1,\ldots,x_n),y)\simeq\hom(\otimes(x_1,\ldots,x_n),y),
\]
natural in $y$, and universal arrows are closed under composition.

\section{Duals for $(\mT,\V)$-categories}

For a $\V$-category $(Z, c) = (Z, c, \eta_c, \mu_c)$, the \emph{dual} $D(Z, c)$ of $(Z, c)$ is defined to be the $\V$-category $Z^\op=(Z, c^\op, \eta_{c^\op}, \mu_{c^\op})$, with $c^\op=c^\circ$, $\eta_{c^\op}=\eta_c^\circ$ and $\mu_{c^\op}=\mu_c^\circ$. This construction extends to a 2-functor
\[D : \VCat \to \VCat^\co\]
as follows. For a $\V$-functor $(f, \fii_f) : (Z, c) \to (W, d)$ set $D(f, \fii_f) = f^\op=(f, \fii_f^\op):(Z,c^\circ)\to(W,d^\circ)$, where $\fii_f^\op$ is defined by
\[\xymatrix{f c^\circ \ar[r]^-{-\lambda_f}& f c^\circ f^\circ f = f (fc)^\circ f \ar[r]^-{-(\fii_f)^\circ -}& f (df)^\circ f = ff^\circ d^\circ f \ar[r]^-{\rho_f -}& d^\circ f.}\]
On 2-cells $\zeta: (f, \fii_f) \to (g, \fii_g)$ of $\VCat$, set $D(\zeta) = \zeta^\op$, which is defined analogously by
\[\xymatrix{f c^\circ \ar[r]^-{-\lambda_g}& f c^\circ g^\circ g = f (gc)^\circ g \ar[r]^-{-\zeta^\circ -}& f (df)^\circ g = ff^\circ d^\circ g \ar[r]^-{\rho_f -}& d^\circ g.}\]

The monad $\mT$ on $\VCat$ of Section \ref{sect:VCatMonad} gives rise to a monad $\mT$ on $\VCat^\co$.
From now on \emph{we assume that $T(c^\circ)=(Tc)^\circ$ for every $\V$-relation $c$}.
Let  $((Z, c), (h, \fii_h))$ be a $\mT$-algebra. Then
\[\xymatrix{(TZ, Tc^\circ) \ar[rr]^{D(h, \fii_h)} && (Z, c^\circ)}\]
gives a $\mT$-algebra structure on $(Z, c^\circ)$, which we write as $((Z, c^\circ), h^\op)$.

\begin{definition}
The \emph{dual} of a $\mT$-algebra $((Z, c), h)$ is the  $\mT$-algebra $(Z^\op,h^\op)=((Z, c^\circ), h^\op)$.
\end{definition}
This construction extends to a 2-functor
\begin{equation}\tag{Dual}\label{eq:Dual}
D : (\VCat)^\mT \longrightarrow ((\VCat)^\mT)^\co
\end{equation}
as follows. If $(f, \fii_f): ((Z, c), h) \to ((W,d), k)$ is a morphism of $\mT$-algebras, then $D(f, \fii_f)=f^\op:((Z, c^\circ), h^\op) \to ((W, d^\circ), k^\op)$ is a morphism of $\mT$-algebras, and if $\zeta:(f,\fii_f)\to(g,\fii_g)$ is a 2-cell in $(\VCat)^\mT$, then $D(\zeta)=\zeta^\op:D(g, \fii_g) \to D(f, \fii_f)$ is a 2-cell in $\VCat^\mT$.

Using the adjunction $M\dashv K$ we can define the dual of a $(\mT,\V)$-category using the construction of duals in $(\VCat)^\mT$ via the composition:
\[\xymatrix@=8ex{(\VCat)^\mT\ar@`{(-20,-20),(-20,20)}^D\ar@{}[r]|{\top}\ar@<1mm>@/^2mm/[r]^{{K}} & \TVCat.\ar@<1mm>@/^2mm/[l]^{{M}}}\]

\begin{definition}
The \emph{dual} of a $(\mT, \V)$-category $(X,a)$ is the $(\mT,\V)$-category $KDM(X,a)$; that is,
\[X^\op=(TX,m_XTa^\circ m_X).\]
\end{definition}

For representable $(\mT,\V)$-categories $(X,a)$ we can use directly extensions of $\widetilde{K}$ and $\widetilde{A}_e$ to pseudo-algebras, so that we can obtain a dual structure $X^{\widetilde{\op}}$ on the same underlying set $X$ via the composition $\widetilde{K}D\widetilde{A}_e$:
\[\xymatrix@=8ex{(\VCat)^\mT\ar@`{(-20,-20),(-20,20)}^D\ar@{}[r]|-{\top}\ar@<1mm>@/^2mm/[r]^-{\widetilde{K}} & (\TVCat)^\mT.\ar@<1mm>@/^2mm/[l]^-{\widetilde{A}_e}}\]
Then it is easily checked that, for any $(\mT,\V)$-category $X$,
\[X^\op=(TX)^{\widetilde{\op}},\]
since $TX$, as a free $\mT$-algebra on $\TVCat$, is representable.

For $\V$ a quantale, duals of $(\mT,\V)$-categories proved to be useful in the study of (co)completeness (see \cite{CH09,CH09a,Hof11}). Next we outline briefly the setting used and the role duals play there.

Let $\V$ be a quantale. When the lax extension of $T:\Set\to\Set$ to $\VRel$ is determined by a map $\xi:TV\to V$ which is a $\mT$-algebra structure on $\V$ (for the $\Set$-monad $\mT$) as outlined in \cite[Section 4.1]{CH09}, then, under suitable conditions, $\V$ itself has a natural $(\mT,\V)$-category structure $\hom_\xi$ given by the composite
\begin{equation}\tag{$(\mT,\V)$-$\hom$}\label{eq:TVhom}
\xymatrix{TV\ar[r]^\xi&V\ar[r]|-{\object@{|}}^\hom&V,}
\end{equation}
where $\hom$ is the internal hom on $V$.\footnote{This is the case when a \emph{topological theory} in the sense of \cite{Hof07} is given; see \cite{Hof07} for details.} Then the well-known equivalence:\\
\begin{quotation}
  Given $\V$-categories $(X,a)$, $(Y,b)$, for a $\V$-relation $r:X\relto Y$,\vspace*{2mm}
\begin{itemize}
\item[] $r:(X,a)\relto (Y,b)$ is a $\V$-module (or profunctor, or distributor)
\item[] $\iff$ the map $r:X^\op\otimes (Y,b)\to(\V,\hom)$ is a $\V$-functor.
\end{itemize}
\end{quotation}
can be generalized to the $(\mT,\V)$-setting. Here a \emph{$(\mT,\V)$-relation} $r:X\krelto Y$ is a $\V$-relation $TX\relto Y$, and $(\mT,\V)$-relations $X\stackrel{r}{\krelto} Y\stackrel{s}{\krelto} Z$ compose as $\V$-relations as follows:
\[\xymatrix{TX\ar[r]|-{\object@{|}}^{m_X^\circ}&T^2X\ar[r]|-{\object@{|}}^{Tr}&TY\ar[r]|-{\object@{|}}^s&Z;}\]
we denote this composition by $s\circ r$. A \emph{$(\mT,\V)$-module} $\fii:(X,a)\krelto(Y,b)$ between $(\mT,\V)$-categories $(X,a)$, $(Y,b)$ is a $(\mT,\V)$-relation such that
\[\fii\circ a=\fii=b\circ\fii.\]
The next result can be found in \cite{CH09} (see also \cite[Remark 5.1 and Lemma 5.2]{Hof13}).

\begin{theorem}\label{prop:Mod_vs_Fun}
Let $(X,a)$ and $(Y,b)$ be $(\mT,\V)$-categories and $\varphi:X\krelto Y$ be a $(\mT,\V)$-relation. The following assertions are equivalent.
\begin{enumerate}[\em (i)]
\item $\varphi:(X,a)\krelto (Y,b)$ is a $(\mT,\V)$-module.
\item The map $\varphi:TX\times Y\to\V$ is a $(\mT,\V)$-functor $\varphi:X^\op\otimes (Y,b)\to(\V,\hom_\xi)$.
\end{enumerate}
\end{theorem}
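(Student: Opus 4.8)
The plan is to translate both conditions into inequalities of $\V$-relations and to match them term by term. Since $\V$ is a quantale, the 2-category $\TVCat$ is merely order-enriched: the coherence diagrams \eqref{eq:fun} hold automatically, a $(\mT,\V)$-functor $(A,\alpha)\to(B,\beta)$ is just a $\Set$-map $f$ with $f\cdot\alpha\le\beta\cdot Tf$ in $\VRel$, and a 2-cell between $(\mT,\V)$-relations is an inequality. Moreover, transposing the unit laws $\eta_a$, $\eta_b$ gives $e_X^\circ\le a$ and $e_Y^\circ\le b$ in $\VRel$; since $e_X^\circ$ is the identity $(\mT,\V)$-relation on $(X,a)$, the defining equalities $\varphi\circ a=\varphi=b\circ\varphi$ of a $(\mT,\V)$-module collapse to the two inequalities $\varphi\circ a\le\varphi$ and $b\circ\varphi\le\varphi$. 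So it suffices to prove that assertion (ii) is equivalent to the conjunction of these two.

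First I would unfold (ii) pointwise. Using $X^\op=(TX,m_XTa^\circ m_X)$ and the description of the tensor of $(\mT,\V)$-categories --- whose convergence $\V$-relation on the set $TX\times Y$ combines, via $\otimes$, the convergence of $X^\op$ along the projection $T\pi_1$ with that of $(Y,b)$ along $T\pi_2$ --- assertion (ii) says that, for all $\mathfrak{w}\in T(TX\times Y)$, $\mathfrak{x}\in TX$ and $y\in Y$,
\[
(m_XTa^\circ m_X)(T\pi_1\mathfrak{w},\mathfrak{x})\otimes b(T\pi_2\mathfrak{w},y)\ \le\ \hom\bigl(\xi(T\varphi(\mathfrak{w})),\varphi(\mathfrak{x},y)\bigr),
\]
where on the right $\varphi$ is read as a $\Set$-map $TX\times Y\to\V$, so that $T\varphi(\mathfrak{w})\in T\V$. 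Transposing across the closed structure $-\otimes u\dashv\hom(u,-)$, this becomes
\[
\xi(T\varphi(\mathfrak{w}))\otimes(m_XTa^\circ m_X)(T\pi_1\mathfrak{w},\mathfrak{x})\otimes b(T\pi_2\mathfrak{w},y)\ \le\ \varphi(\mathfrak{x},y).
\]
The two module inequalities unfold to expressions of the very same shape, now with the lax extension of the $\V$-relation $\varphi$ in place of the $\Set$-map $T\varphi$; the two readings of $T\varphi$ are tied together by the fact that, under the standing topological-theory hypothesis, the lax extension $\mT$ on $\VRel$ is the one induced by $\xi$, so that $\xi(T\varphi(\mathfrak{w}))$ records exactly the supremum carried by that extension. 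With this identification, the displayed inequality splits into $\varphi\circ a\le\varphi$ and $b\circ\varphi\le\varphi$: each is obtained by specialising $\mathfrak{w}$ so that one of the two actions degenerates to an identity through the unit $\eta_a$ or $\eta_b$ (for instance, letting the $Y$-component of $\mathfrak{w}$ be $Te_Y$-concentrated at $y$ and using $I\le b(e_Y(y),y)$). Conversely, a general $\mathfrak{w}$ is handled by applying $b\circ\varphi\le\varphi$ in the $Y$-direction and then $\varphi\circ a\le\varphi$ in the $X$-direction, the two being chained by the (op)lax naturality of $m$ and $\kappa$ together with the invertibility of $\widehat\beta$ assumed in Section~\ref{sect:FundAdj}.

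The step I expect to be the real obstacle is precisely this splitting and recombination: reconciling the ``two-variable'' convergence of $X^\op\otimes(Y,b)$ with the Kleisli-type composite $b\cdot T(-)\cdot m_X^\circ$ underlying $\circ$, and verifying that the factor $\xi(T\varphi(\mathfrak{w}))$ supplies exactly --- no more, no less --- the term needed to weld the two one-sided actions back together. Everything else is routine: the coherence of \eqref{eq:fun} is free over a quantale, and checking that $\hom_\xi$ is a $(\mT,\V)$-category and that $X^\op\otimes(Y,b)$ carries the stated structure uses only that $\xi$ is an Eilenberg--Moore algebra, i.e.\ $\xi\cdot e_\V=1_\V$ and $\xi\cdot T\xi=\xi\cdot m_\V$. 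For the complete computation we refer to \cite{CH09}; see also \cite[Remark~5.1 and Lemma~5.2]{Hof13}.
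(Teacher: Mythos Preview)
The paper does not give its own proof of this theorem: it merely points to \cite{CH09} and \cite[Remark~5.1 and Lemma~5.2]{Hof13} before stating the result. Your proposal ends with exactly the same citations, so at that level there is nothing to compare; you simply go further than the paper by sketching what those references do. The sketch is sound --- over a quantale the module axioms collapse to the two inequalities $\varphi\circ a\le\varphi$ and $b\circ\varphi\le\varphi$, the $(\mT,\V)$-functor condition on $X^\op\otimes(Y,b)\to(\V,\hom_\xi)$ unfolds pointwise as you describe, and the identification of $\xi(T\varphi(\mathfrak w))$ with the value supplied by the $\xi$-induced lax extension is precisely the hinge used in \cite{CH09} --- so your proposal agrees with, and in fact elaborates on, the paper's treatment.
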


In particular, the $(\mT,\V)$-relation $a:X\krelto X$ is a $(\mT,\V)$-module from $(X,a)$ to $(X,a)$. Although $\TVCat$ is in general not monoidal closed for $\otimes$, the functor $X^\op\otimes-:\TVCat\to\TVCat$ has a right adjoint $(-)^{X^\op}:\TVCat\to\TVCat$ for every $(\mT,\V)$-category $X$, and from the $(\mT,\V)$-module $a$ we obtain the \emph{Yoneda $(\mT,\V)$-functor}
\[
 y_X:X\to\V^{X^\op}.
\]
By Theorem \ref{prop:Mod_vs_Fun}, we can think of the elements of $\V^{X^\op}$ as $(\mT,\V)$-modules from $(X,a)$ to $(1,e_1^\circ)$. The following result was proven in \cite{CH09} and provides a Yoneda-type Lemma for $(\mT,\V)$-categories.

\begin{theorem}\label{lem:Yoneda}
Let $(X,a)$ be a $(\mT,\V)$-category. Then, for all $\psi$ in $\V^{X^\op}$ and all $\xx\in TX$,
\[
 \llbracket Ty_X(\xx),\psi\rrbracket=\psi(\xx),
\]
with $\llbracket-,-\rrbracket$ the $(\mT,\V)$-categorical structure on $\V^{X^\op}$.
\end{theorem}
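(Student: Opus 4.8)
The statement is the Yoneda lemma for $(\mT,\V)$-categories, and the plan is to prove it by the same two-inequality argument that settles the quantale-enriched Yoneda lemma, the extra work being the coherence bookkeeping imposed by $\mT$. First I would make the two protagonists explicit. By Theorem~\ref{prop:Mod_vs_Fun}, an element $\psi\in\V^{X^\op}$ is precisely a $(\mT,\V)$-module $\psi:(X,a)\krelto(1,e_1^\circ)$, i.e.\ a map $\psi:TX\to\V$ with $\psi\circ a=\psi$, equivalently a $(\mT,\V)$-functor $\psi:X^\op\otimes(1,e_1^\circ)\to(\V,\hom_\xi)$. And $y_X:X\to\V^{X^\op}$ is, by construction, the transpose along the adjunction $X^\op\otimes-\dashv(-)^{X^\op}$ of the $(\mT,\V)$-functor $X^\op\otimes(X,a)\to(\V,\hom_\xi)$ that Theorem~\ref{prop:Mod_vs_Fun} attaches to the module $a:(X,a)\krelto(X,a)$; unwinding this transpose once shows that $y_X$ sends $x\in X$ to the module $a(-,x):(X,a)\krelto(1,e_1^\circ)$, that is, to the map $\mathfrak{w}\mapsto a(\mathfrak{w},x)$.

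Next I would spell out the $(\mT,\V)$-categorical structure $\llbracket-,-\rrbracket:T(\V^{X^\op})\relto\V^{X^\op}$ of the power. Since $(-)^{X^\op}$ is right adjoint to $X^\op\otimes-$ and $\V$ carries the internal-hom structure $\hom_\xi$, this structure is the cotensor of $(\V,\hom_\xi)$ along $X^\op$, and, exactly as for the presheaf $(\mT,\V)$-categories of \cite{CH09}, $\llbracket\mathfrak P,\psi\rrbracket$ is an infimum, over $\xx\in TX$, of $\hom_\xi$-values pairing a term built from $\mathfrak P$, $\xx$ and $a$ against $\psi(\xx)$. Substituting $\mathfrak P=Ty_X(\xx)$, inserting the explicit description of $y_X$, and commuting the outer $T$ past the composition defining $y_X$ --- this is where the oplax cell $\alpha$, the comparison cells $\kappa$ and $\kappa^{-1}$, and the invertibility of $\hat{\beta}$ (assumed from Section~\ref{sect:FundAdj} on) enter --- brings $\llbracket Ty_X(\xx),\psi\rrbracket$ into a normal form in which the required equality with $\psi(\xx)$ splits into two opposite inequalities: one is obtained by evaluating the infimum at the distinguished index $\xx$ itself and using the unit law $\eta_a$ of $(X,a)$ (the $(\mT,\V)$-analogue of the reflexivity $I\le a(x_0,x_0)$ of the enriched case), while the other is precisely the $(\mT,\V)$-functoriality of $\psi$, i.e.\ the module law $\psi\circ a=\psi$ read as an action, together with the symmetry of $\otimes$.

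The main obstacle is this normalisation step: once $\llbracket Ty_X(\xx),\psi\rrbracket$ has been put into the form above, the collapse to $\psi(\xx)$ is immediate, but reaching that form is a moderately long diagram chase, caused entirely by $T$ being applied to the composite $y_X$, so that $\alpha$, $\kappa$ and $\hat{\beta}^{-1}$ all have to be manoeuvred past one another in the right order. A more abstract alternative sidesteps the explicit formula for $\llbracket-,-\rrbracket$: one uses that $\llbracket-,-\rrbracket$ is characterised by the universal property of the power, that $y_X$ and $\psi$ are both mates of $(\mT,\V)$-modules, and that the two $\V$-relations $TX\relto 1$ so produced coincide by the triangle identities for $X^\op\otimes-\dashv(-)^{X^\op}$ together with Theorem~\ref{prop:Mod_vs_Fun}; this replaces the computation by a careful tracking of adjunction transposes.
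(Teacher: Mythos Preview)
The paper does not prove this statement: it is quoted, in the quantale setting, from \cite{CH09}. Your two-inequality plan (one direction from the module law $\psi\circ a=\psi$, the other from evaluating the infimum at the index $\xx$ and using the unit $\eta_a$) is precisely the argument given there, so the core strategy is correct.

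Where your proposal goes astray is in locating the difficulty. The theorem is stated only for $\V$ a quantale; in that thin setting every hom of $\VRel$ has at most one morphism, so the coherence 2-cells $\alpha$, $\kappa$, $\kappa^{-1}$, $\hat\beta$ you intend to manoeuvre are mere order inequalities and the ``moderately long diagram chase'' you anticipate collapses to nothing. The genuine content of the proof in \cite{CH09} is the explicit description of $\llbracket-,-\rrbracket$ on $\V^{X^\op}$ as a pointwise $\bigwedge$ of $\hom$-values, after which the two inequalities are read off directly. The paper in fact remarks, immediately after the theorem, that extending this result to non-thin $\V$ --- exactly the situation where your coherence worries would become real --- is still open; so your more elaborate outline is aimed at a conjecture, not at the stated theorem.
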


To generalise these results to the general setting studied in this paper, that is when $\V$ is not necessarily a thin category, one faces a first obstacle: When can we equip the category $\V$ with a canonical (although non-legitimate) $(\mT,\V)$-category structure as in (\ref{eq:TVhom})? The obstacle seems removable when $\mT=\mM$ is the free-monoid monad.
In fact, as above, the monoidal structure $(X_1,\ldots,X_n)\mapsto X_1\otimes\dots\otimes X_n$ defines a lax extension of $\mM$ to $\Rels{\V}$, a monoidal structure on $\Cats{(\mM,\V)}\simeq\V$-$\MultiCat$, and it turns $\V$ into a generalised multicategory. We therefore conjecture that Theorems \ref{prop:Mod_vs_Fun} and \ref{lem:Yoneda} hold also in this more general situation; however, so far we were not able to prove this.

\section*{Acknowledgments}

The first author acknowledges partial financial  %@@
assistance by the Centro de Matem\'{a}tica da Universidade de Coimbra (CMUC),
funded by the European Regional Development Fund through the program COMPETE

and by the Portuguese Government through FCT, under the project PEst-C/MAT/UI0324/2013 and grant number SFRH/BPD/79360/2011, and by the Presidential Grant for Young Scientists, PG/45/5-113/12, of the National Science Foundation of Georgia.

The second author acknowledges partial financial  %@@
assistance by CMUC,
funded by the European Regional Development Fund through the program COMPETE
and by the Portuguese Government through FCT, under the project PEst-C/MAT/UI0324/2013.

The third author acknowledges partial financial assistance by Portuguese funds through CIDMA (Center for Research and Development in Mathematics and Applications), and the Portuguese Foundation for Science and Technology (``FCT -- Funda\c{c}\~ao para a Ci\^encia e a Tecnologia''), within the project PEst-OE/MAT/UI4106/2014, and by the project NASONI under the contract PTDC/EEI-CTP/2341/2012.

 {\small}
\end{document}